\newtheorem*{claim*}{Claim}
\theoremstyle{theorem}
\newtheorem*{theoremA}{Theorem A}
\newtheorem*{theoremB}{Theorem B}
\newtheorem*{theoremC}{Theorem C}
\newtheorem*{theoremD}{Theorem D}
\theoremstyle{remark}
\newtheorem*{*setting}{Setting}
\theoremstyle{remark}
\DeclareMathOperator{\length}{\ell}
\renewcommand{\O}{\mathcal{O}}
\renewcommand{\cf}{{\itshape cf.\xspace\xspace}}
\renewcommand{\:}{\colon}
\DeclareMathOperator{\vol}{vol}
\renewcommand{\phi}{\varphi}
\renewcommand{\theta}{\vartheta}
\renewcommand{\epsilon}{\varepsilon}
\renewcommand{\to}[1][]{\xrightarrow{\ #1\ }}
\begin{document}

\title [$F$-signature of pairs]{\texorpdfstring{$F$}{F}-signature of pairs: Continuity, $p$-fractals and minimal log discrepancies}
\author{Manuel Blickle, Karl Schwede, Kevin Tucker}

\address{ Institut f\"ur Mathematik\\ Johannes Gutenberg-Universit\"at Mainz\\55099 Mainz, Germany}
\email{blicklem@uni-mainz.de}
\address{Department of Mathematics\\ The Pennsylvania State University\\ University Park, PA, 16802, USA}
\email{schwede@math.psu.edu}
\address{Department of Mathematics\\ Princeton University\\ Princeton, NJ, 08544, USA}
\email{kftucker@math.princeton.edu}

\thanks{The first author was partially supported by a Heisenberg Fellowship and the SFB/TRR45}
\thanks{The second author was partially supported by the NSF grant DMS \#1064485}
\thanks{The third author was partially supported by the NSF postdoctoral fellowship DMS \#1004344}

\subjclass[2010]{13A35, 13D40, 14B05, 13H10, 14F18}
\keywords{$F$-signature, Cartier algebra, $F$-pure, $F$-regular, Hilbert-Kunz multiplicity, $p$-fractal, minimal log discrepancy, mld}
\maketitle

\begin{abstract} This paper contains a number of observations on the \mbox{$F$-signature} of triples $(R,\Delta,\ba^t)$ introduced in our previous joint work \cite{BlickleSchwedeTuckerFSigPairs1}. We first show that the \mbox{$F$-signature} $s(R,\Delta,\ba^t)$ is continuous as a function of $t$, and for principal ideals $\ba$ even convex.  We then further deduce, for fixed $t$, that
the $F$-signature is lower semi-continuous as a function on $\Spec R$ when $R$ is regular and $\ba$ is principal. We also point out the close relationship of the signature function in this setting to the works of Monsky and Teixeira  on Hilbert-Kunz multiplicity and $p$-fractals \cite{MonskyTeixeiraPFractals1,MonskyTeixeiraPFractals2}.  Finally, we conclude by showing that the minimal log discrepancy of an arbitrary triple $(R,\Delta,\ba^t)$ is an upper bound for the $F$-signature.
\end{abstract}


\section{Introduction}
The $F$-signature $s(R)$ of a reduced local ring $(R,\bm,k)$ in positive characteristic gives a measure of singularities by analyzing the asymptotic behavior of the number of splittings ($F$-splittings) of large iterates of the Frobenius endomorphism.
It is a real number between $0$ and $1$, $s(R)=1$ characterizing regular rings \cite{HunekeLeuschkeTwoTheoremsAboutMaximal,YaoObservationsAboutTheFSignature}, and $s(R) > 0$ characterizing $F$-regular rings \cite{AberbachLeuschke}.
More generally, this rather subtle invariant is thought to encode various arithmetic and geometric properties of $R$; for example, the $F$-signature recovers the group order of tame finite quotient singularities (\cite[Example 18]{HunekeLeuschkeTwoTheoremsAboutMaximal},\cite[Remark 4.7]{YaoObservationsAboutTheFSignature},\cite[Corollary 4.13]{TuckerFSignatureExists}), and is closely related to the theory of Hilbert-Kunz multiplicity.

In our previous work \cite{BlickleSchwedeTuckerFSigPairs1}, we extended the notion of $F$-signature to the setting commonly considered in birational algebraic geometry.  Specifically, we consider triples $(R,\Delta,\ba^t)$ where $R$ is a local normal domain, $\Delta$ is an effective \mbox{$\mathbb{R}$-divisor} on $X = \Spec R$, and $\ba \subseteq R$ is a non-zero ideal with coefficient $t \in \mathbb{R}_{\geq 0}$ (\cf~\cite{BlickleSchwedeTakagiZhang}).  Once more, the $F$-signature of such a triple is an asymptotic measure of a number of $F$-splittings, however we restrict the set of ``allowable'' splittings by taking into account the additional data of the triple. Importantly, as before in the absolute case above, the positivity of the $F$-signature characterizes strongly $F$-regular triples \cite[Theorem 3.18]{BlickleSchwedeTuckerFSigPairs1}.

The purpose of the present paper is to point out a number of interesting properties of the $F$-signature of triples and relate it to previously studied invariants of singularities. We spend most of our time studying the function
\[
t \mapsto s(R, \Delta, \ba^t).
\]
Most notably, we first show that this function is Lipschitz and hence, in particular, continuous as a function of $t$. In case $R$ is regular and $\ba = \langle f \rangle$ is principal, substantially more can be said: the \mbox{$F$-signature} $s(R, f^{t})$ is shown to be convex, and its derivatives recover both the Hilbert-Kunz multiplicity and \mbox{$F$-signature} of $R/ \langle f \rangle$.  Furthermore, it is closely related to the works of Monsky and Teixeira on Hilbert-Kunz multiplicity and $p$-fractals \cite{MonskyTeixeiraPFractals1,MonskyTeixeiraPFractals2}. As an application of these results we show that, for \emph{fixed} $t$, the $F$-signature $s(R, f^{t})$ is lower semi-continuous as a function on $\Spec R$.  Finally, we conclude by showing that the minimal log discrepancy of an arbitrary triple $(R,\Delta,\ba^t)$ is an upper bound for the $F$-signature.

To simplify notation when summarizing our results more precisely in this introduction, we consider only the case of pairs $(R,f^t)$ with $R$ a $d$-dimensional $F$-finite normal local domain with \emph{perfect} residue field, $f\in R$ a non-zero element, and $t \in \mathbb{R}_{\geq 0}$.
For each $e > 0$, we consider the maximal rank $a_{e}$ of an $R$-free direct summand of $F^e_*R \cong R^{1/p^e}$.  In other words, $a_{e}$ is determined by writing
\[
F^e_* R \cong R^{\oplus a_e} \oplus M_e
\]
as $R$-modules where $M_{e}$ has no free direct summand, and
the $F$-signature is defined to be the limit \cite{SmithVanDenBerghSimplicityOfDiff,HunekeLeuschkeTwoTheoremsAboutMaximal}
\[
    s(R)= \lim_{e \to \infty} {a_e \over p^{ed} } .
\]
Recently, the third author has shown that this limit exists in general \cite{TuckerFSignatureExists}. The \mbox{$F$-signature} of the pair $(R,f^t)$ is defined similarly after restricting which direct summands are taken into account; setting $\cramped{a_{e}^{f^t}}$ to be the maximal rank of an $R$-free direct summand of $F^e_* R \simeq R^{\oplus a_{e}^{f^{t}}} \oplus M_{e}^{f^{t}}$ where the associated projections $F^{e}_{*}R \to R$ factor through multiplication by $F^e_* f^{\lceil t (p^e - 1) \rceil}$ on $F^{e}_{*}R$, the $F$-signature of the pair $(R,f^t)$ is the limit
\[
    s(R, f^t)= \lim_{e \to \infty} {a_{e}^{f^t} \over p^{e} }
\]
shown to exist in \cite{BlickleSchwedeTuckerFSigPairs1}.

The first new result obtained herein is the continuity of $F$-signature in terms of the parameter $t$.
\begin{theoremA} [\autoref{thm.Continuity}]
Suppose that $R$ is an $F$-finite local ring and $f \in R$.  Then the function
\[
t \mapsto s(R, f^t)
\]
is a continuous function of $t$ on $[0,\infty)$.
\end{theoremA}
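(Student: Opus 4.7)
The plan is to prove a uniform Lipschitz bound for $t \mapsto s(R,f^t)$, which immediately gives continuity. I may assume $f \in \m$, since otherwise the function is constant. The key object is the \emph{pair splitting ideal}
\[
I_e^{f^t} \;=\; \bigl\{\, r \in R \,\big|\, \phi\bigl(F^e_*(f^{n_e(t)}\, r)\bigr) \in \m \text{ for every } \phi \in \Hom_R(F^e_*R, R)\,\bigr\},
\]
with $n_e(t) := \lceil t(p^e - 1)\rceil$. Since the residue field is perfect, $a_e^{f^t} = \lambda(R/I_e^{f^t})$, and one directly verifies $\m^{[p^e]} \subseteq I_e^{f^t}$, so the quotient has finite length.

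Fix $0 \le t_1 < t_2$ and write $n_i = n_e(t_i)$. Precomposing any $\phi \: F^e_*R \to R$ with multiplication by $F^e_*(f^{n_2-n_1})$ produces another element of $\Hom_R(F^e_*R,R)$, and this single manipulation simultaneously yields the monotonicity $I_e^{f^{t_1}} \subseteq I_e^{f^{t_2}}$ (so $s(R,f^t)$ is nonincreasing in $t$) and the crucial containment
\[
f^{\,n_2 - n_1} \cdot I_e^{f^{t_2}} \;\subseteq\; I_e^{f^{t_1}}.
\]
Hence $I_e^{f^{t_2}}/I_e^{f^{t_1}}$ is contained in the kernel of multiplication by $f^{n_2-n_1}$ on the finite-length module $R/I_e^{f^{t_1}}$. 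Since an endomorphism of a finite-length module has kernel and cokernel of equal length,
\[
a_e^{f^{t_1}} - a_e^{f^{t_2}} \;=\; \lambda\bigl(I_e^{f^{t_2}}/I_e^{f^{t_1}}\bigr) \;\le\; \lambda\bigl(R/(I_e^{f^{t_1}} + f^{n_2-n_1}R)\bigr) \;\le\; \lambda\bigl(R/(\m^{[p^e]} + f^{n_2 - n_1}R)\bigr).
\]

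Next I would bound the right-hand side by a telescoping filtration: each successive quotient $(\m^{[p^e]} + f^{k-1}R)/(\m^{[p^e]} + f^{k}R)$ is, via multiplication by $f^{k-1}$, a cyclic quotient of $R/(\m^{[p^e]} + fR)$. Summing $k = 1,\dots,n_2 - n_1$ gives
\[
\lambda\bigl(R/(\m^{[p^e]} + f^{n_2-n_1}R)\bigr) \;\le\; (n_2 - n_1)\cdot \lambda\bigl(R/(\m^{[p^e]} + fR)\bigr).
\]
The final factor is the $e$-th Hilbert-Kunz function of $R/(f)$, which grows as $O(p^{e(d-1)})$ with a constant $C$ depending only on $R$ and $f$ (asymptotically $e_{HK}(R/f)$). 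Combined with $n_2 - n_1 \le (t_2 - t_1)(p^e - 1) + 1$, dividing by $p^{ed}$ and letting $e \to \infty$ produces
\[
0 \;\le\; s(R, f^{t_1}) - s(R, f^{t_2}) \;\le\; C(t_2 - t_1),
\]
uniform Lipschitz continuity on $[0,\infty)$.

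I expect the genuine obstacle to be the key containment $f^{n_2-n_1} I_e^{f^{t_2}} \subseteq I_e^{f^{t_1}}$, which packages the entire interaction between the floor function $\lceil t(p^e - 1)\rceil$ and the restricted class of splittings. Once it is in place, the remaining steps are essentially standard length calculations, reducing the problem to the well-understood asymptotics of the Hilbert-Kunz function of $R/(f)$.
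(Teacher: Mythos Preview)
Your proof is correct and follows the same architecture as the paper's: express the splitting numbers as lengths $\ell_R(R/(I_e:f^n))$, identify the difference at two parameter values as the kernel of multiplication by a power of $f$ on a finite-length quotient, convert kernel length to cokernel length, and bound the cokernel. The only real variation is in that last bounding step. The paper completes $f=x_1$ to a full system of parameters $x_1,\dots,x_d$ (after reducing to $R$ strongly $F$-regular, hence Cohen--Macaulay) and uses the containment $\langle x_2^{p^e},\dots,x_d^{p^e}\rangle\subseteq\m^{[p^e]}$ together with the multiplicativity formula for a regular sequence, obtaining Lipschitz constant $\ell_R(R/\langle x_1,\dots,x_d\rangle)$. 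Your telescoping argument instead invokes Monsky's $O(p^{e(d-1)})$ growth of the Hilbert--Kunz function of $R/\langle f\rangle$, producing a constant asymptotically equal to $e_{HK}(R/\langle f\rangle)$. This is in fact sharper: the paper later proves (\autoref{thm.identifyderivatives} and \autoref{cor.DerivativesAt0AreLipschitzConstants}, for regular $R$) that $e_{HK}(R/\langle f\rangle)$ is the \emph{optimal} Lipschitz constant. So you trade an elementary regular-sequence computation for an appeal to Monsky, and are rewarded with a tighter bound that anticipates the later result.

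One small correction to your self-assessment: the containment $f^{n_2-n_1}I_e^{f^{t_2}}\subseteq I_e^{f^{t_1}}$ that you flag as the ``genuine obstacle'' is actually immediate. In colon-ideal language it reads $f^{n_2-n_1}(I_e:f^{n_2})\subseteq(I_e:f^{n_1})$, which is a tautology, and there is no subtle interaction with the ceiling function hiding here. All of the content lies in the asymptotic length bound you carry out afterward.
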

\noindent This result also holds for triples $(R, \Delta, \ba^t)$ as well.  Using this continuity result (and the method of its proof), we also obtain the stronger statement
\begin{theoremB} [\autoref{thm.Convexity}]
Suppose that $R$ is an $F$-finite local ring and $f \in R$.  Then the function
\[
t \mapsto s(R, f^t)
\]
is convex on $[0, \infty)$.
\end{theoremB}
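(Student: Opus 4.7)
The plan is to reduce the statement to a discrete convexity estimate for the splitting numbers $a_e^{f^t}$, and then upgrade to convexity of the limit via \autoref{thm.Continuity}. Fix $e \geq 1$ and set $n = \lceil t(p^e-1)\rceil$. As in \cite{BlickleSchwedeTuckerFSigPairs1}, one has $a_e^{f^t} = \length(R/J_e(n))$ where
\[
J_e(n) := \{\, r \in R : \phi(F^e_*(f^n r)) \in \m \text{ for all } \phi \in \Hom_R(F^e_*R, R) \,\},
\]
so writing $d_e(n) := \length(R/J_e(n))$ gives $s(R,f^t) = \lim_{e \to \infty} p^{-ed} d_e(\lceil t(p^e-1)\rceil)$.

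The crucial step is to show that for each fixed $e$, the sequence $\{d_e(n)\}_{n \geq 0}$ is discretely convex. Multiplication by $f$ carries $J_e(m+1)$ into $J_e(m)$ for every $m$, so it induces an $R$-module homomorphism
\[
\cdot f \colon J_e(n+1)/J_e(n) \longrightarrow J_e(n)/J_e(n-1),
\]
which is injective: if $fr \in J_e(n-1)$ then $\phi(F^e_*(f^n r)) = \phi(F^e_*(f^{n-1} \cdot fr)) \in \m$ for every $\phi$, so $r \in J_e(n)$. Consequently $\length(J_e(n+1)/J_e(n)) \leq \length(J_e(n)/J_e(n-1))$, which is equivalent to $d_e(n+1) + d_e(n-1) \geq 2\, d_e(n)$.

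In particular, the piecewise linear interpolation $\tilde d_e \colon [0,\infty) \to \mathbb{R}$ of the sequence $d_e$ is a convex function, and hence so is $\phi_e(t) := p^{-ed}\, \tilde d_e((p^e-1)\, t)$. Since the pointwise limit of convex functions is convex, it suffices to check $\phi_e(t) \to s(R, f^t)$ as $e \to \infty$. Writing $(p^e-1)\, t = n + \theta$ with $\theta \in [0,1)$, we have $\phi_e(t) = (1-\theta)\, p^{-ed} d_e(n) + \theta\, p^{-ed} d_e(n+1)$, while $p^{-ed} d_e(n+1) = p^{-ed} a_e^{f^t} \to s(R,f^t)$ by definition; the difference between them is bounded by $p^{-ed}\bigl(d_e(n) - d_e(n+1)\bigr)$, and the main technical point is that this tends to zero. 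Since consecutive integer values of $n$ correspond to $t$-values differing by $1/(p^e-1)$, the vanishing will follow from the Lipschitz estimate behind \autoref{thm.Continuity}, which we expect to control $p^{-ed} d_e(n)$ uniformly in $n$ rather than merely pointwise in $t$. Extracting this uniform finite-level estimate from (rather than merely citing the conclusion of) \autoref{thm.Continuity} is the principal obstacle in executing the argument.
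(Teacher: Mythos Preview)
Your approach is correct, and the core idea---discrete convexity of the sequence $n \mapsto \length_R\bigl(R/(I_e:f^n)\bigr)$---is exactly what the paper uses. The paper packages the argument differently: rather than passing to a limit of convex piecewise-linear interpolants, it first invokes continuity to reduce to checking $D(a/p^c,(a+1)/p^c) \leq D((a+1)/p^c,(a+2)/p^c)$ on the dense set of $p$-adic rationals, then identifies each consecutive difference $s(R,f^{a/p^c})-s(R,f^{(a+1)/p^c})$ as $\lim_e p^{-ed}\length_R\bigl(R/\langle (I_e:f^{ap^{e-c}}),\,f^{p^{e-c}}\rangle\bigr)$ and compares via the trivial containment $(I_e:f^{ap^{e-c}}) \subseteq (I_e:f^{(a+1)p^{e-c}})$. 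This ordering sidesteps what you call the ``principal obstacle'': the paper only needs the \emph{conclusion} of \autoref{thm.Continuity}, not a finite-level uniform Lipschitz estimate. That said, the estimate you need, $d_e(n)-d_e(n+1) \leq C\,p^{e(d-1)}$ uniformly in $n$, is precisely the content of \eqref{eq:smalldiff} in the proof of \autoref{thm.Continuity} (with equality in the first line since $\ba=\langle f\rangle$ is principal), so your route goes through without difficulty.
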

\noindent
However, in contrast to the continuity statement, we are only able to show that the convexity generalizes to triples where the ideal $\ba = \langle f \rangle$ is principal.

If $R$ is regular and $t = a/p^c$ for $a,c \in \bN$, then $s(R, f^t)$ can be computed from a single length as opposed to an abstract limit; see \autoref{prop.EasyComputationOfFSignatureForRegular}.  Interestingly, this length has also featured prominently in the works of Monsky and Teixeira on Hilbert-Kunz multiplicity and $p$-fractals (see \cite{MonskyTeixeiraPFractals1,MonskyTeixeiraPFractals2}). In \autoref{sec.RelationWithPFractals}, we explain this connection in detail and further clarify the relationship between the $F$-signature function $s(R,f^{t})$ and the invariants of the hypersurface $R / \langle f \rangle$ (see \autoref{thm.identifyderivatives}).  Indeed, we have that the right derivative of $s(R, f^{t})$ exists at $t = 0$ and equals the negative of the Hilbert-Kunz multiplicity of $R/\langle f \rangle$; similarly, the left derivative of $s(R, f^{t})$ exists at $t = 1$ and equals the negative of the \mbox{$F$-signature} of $R/\langle f \rangle$.  In particular, it follows that $R/ \langle f \rangle$ is strongly \mbox{$F$-regular} if and only if $s(R,f^{t})$ is \emph{not} differentiable at $t = 1$.
Finally, we also numerically compute the example of the \mbox{$F$-signature} function $t \mapsto s(R, f^t)$ of the cusp $f := y^2 + x^3  \in k[x,y]_{\langle x,y \rangle} =: R$ in some small characteristics.

In the absolute setting, a difficult and important open problem is to show that the \mbox{$F$-signature} is lower semi-continuous as a function on the prime spectrum of a ring \cite{EnescuYaoLowerSemicontinuity}. However, using the computation of the $F$-signature for certain pairs as a single length together with the continuity in the scaling parameter, we are able to show the following result.
\begin{theoremC} [\autoref{thm.SemiContinuityPair}]
If $R$ is regular and not necessarily local with $0 \neq f \in R$ and fixed $t \geq 0$, then the function $\Spec R \to \bR$ defined by the rule
\[
\bq \mapsto s(R_{\bq}, f^t)
\]
is lower semi-continuous in the Zariski topology of $\Spec R$.
\end{theoremC}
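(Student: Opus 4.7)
The plan is to first establish semi-continuity when $t = a/p^{c}$ is rational with $p$-power denominator, and then extend to arbitrary $t$ by invoking the continuity result of Theorem~A.  Restricting to such $t$ is crucial: it replaces the abstract asymptotic limit defining the $F$-signature with a single length invariant, which in turn translates semi-continuity into the standard semi-continuity of fibre dimensions for coherent sheaves.

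Concretely, for $t = a/p^{c}$, \autoref{prop.EasyComputationOfFSignatureForRegular} combined with the short exact sequence
\[
0\to R_{\bq}/(\bq^{[p^{c}]}:f^{a})\xrightarrow{\cdot f^{a}} R_{\bq}/\bq^{[p^{c}]}\to R_{\bq}/(\bq^{[p^{c}]}+f^{a})\to 0
\]
yields
\[
s(R_{\bq},f^{a/p^{c}})\;=\;1-\frac{\ell_{R_{\bq}}\!\bigl(R_{\bq}/(\bq^{[p^{c}]}+f^{a})R_{\bq}\bigr)}{\ell_{R_{\bq}}(R_{\bq}/\bq^{[p^{c}]}R_{\bq})}.
\]
Since $R$ is regular, each connected component of $\Spec R$ is integral and has a well-defined dimension $d$.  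Rescaling by the factor $p^{c(d-\height(\bq))}=[k(\bq):k(\bq)^{p^{c}}]$ absorbs both lengths into the $k(\bq)$-dimensions of the fibres at $\bq$ of the coherent sheaves $F^{c}_{*}(R/f^{a}R)$ and $F^{c}_{*}R$.  The latter is locally free by Kunz's theorem, with \emph{constant} fibre dimension $p^{cd}$ on each component; the former has upper semi-continuous fibre dimension by the standard semi-continuity theorem for finitely generated modules.  Consequently $\bq \mapsto s(R_{\bq},f^{a/p^{c}})$ is lower semi-continuous, which settles this special case.

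For general $t\in\bR_{\geq 0}$, I would approximate by a sequence of $p$-rationals $t_{n}\to t$ and argue that the functions $\bq\mapsto s(R_{\bq},f^{t_{n}})$ converge locally uniformly in $\bq$ to $\bq\mapsto s(R_{\bq},f^{t})$.  A locally uniform limit of lower semi-continuous functions is lower semi-continuous.  Local uniformity should follow by inspecting the proof of Theorem~A: the Lipschitz constant there is controlled by quantities such as the Hilbert--Kunz multiplicity of $f$, which are bounded on quasi-compact opens of $\Spec R$.  Alternatively, one may sidestep this analysis by invoking the convexity of Theorem~B to realise $s(R_{\bq},f^{t})$ as a pointwise supremum of affine minorants anchored at $p$-rational parameters; a supremum of lower semi-continuous functions is again lower semi-continuous.

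The principal obstacle lies in this extension step: the closed-form, single-length description of the $F$-signature is unavailable for irrational $t$, so lower semi-continuity must be transferred via Theorems~A or~B rather than derived directly.  The main technical care required is ensuring enough uniformity in $\bq$ to push lower semi-continuity through the limit in $t$, which should be achievable either by refining the Lipschitz estimate in Theorem~A or by routing through the convexity statement.
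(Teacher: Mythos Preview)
Your reduction to the case $t = a/p^{c}$ via \autoref{prop.EasyComputationOfFSignatureForRegular} and the short exact sequence, followed by upper semi-continuity of the fibre dimensions of $F^{c}_{*}(R/f^{a}R)$, matches the paper's \autoref{prop.BaseSemiContinuityAsPointVaries} essentially line for line.

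For the extension to arbitrary $t$, however, you have overlooked the simplest route. The paper exploits that $t \mapsto s(R_{\bq}, f^{t})$ is \emph{non-increasing}: choosing any $a/p^{c} \geq t$ gives $s(R_{\bp}, f^{t}) \geq s(R_{\bp}, f^{a/p^{c}})$ for \emph{every} prime $\bp$, with no uniformity in $\bp$ required. Continuity in $t$ is then invoked only at the single point $\bq_{0}$ under consideration, to guarantee that $s(R_{\bq_{0}}, f^{a/p^{c}})$ can be made arbitrarily close to $s(R_{\bq_{0}}, f^{t})$. Equivalently: $s(R_{\bq}, f^{t}) = \sup\{\, s(R_{\bq}, f^{a/p^{c}}) : a/p^{c} \geq t \,\}$ pointwise in $\bq$, and a pointwise supremum of lower semi-continuous functions is lower semi-continuous. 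Your route (a) through locally uniform Lipschitz bounds can be made to work---the sharp Lipschitz constant at $\bq$ is $e_{HK}((R/f)_{\bq})$ by \autoref{thm.identifyderivatives} and \autoref{cor.DerivativesAt0AreLipschitzConstants}, and this is upper semi-continuous in $\bq$ (hence locally bounded) by the same Shepherd-Barron result the paper already invokes---but this is noticeably more machinery than monotonicity. Your route (b) through affine minorants of the convex function is problematic as stated: the supporting line at a $p$-rational $s$ has slope equal to a subgradient that varies with $\bq$, and there is no reason this slope should be lower semi-continuous in $\bq$; if instead you anchor the secant at a second $p$-rational point beyond the $F$-pure threshold (where the signature vanishes identically), the resulting lower bound collapses exactly to the monotonicity inequality anyway.
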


The $F$-signature is only interesting (\textit{i.e.} non-zero) for strongly $F$-regular triples, a notion which corresponds to the characteristic zero property of Kawamata log terminal singularities via reduction to prime characteristic $p > 0$. The basic invariant used to study Kawamata log terminal singularities are the minimal log discrepancies, which we are able to compare to the $F$-signature in the final section of this article.
\begin{theoremD} [\autoref{thm.F-sigLeqMLD}]
Suppose that $R$ is normal $\bQ$-Gorenstein and $F$-finite. Then for all $x \in X = \Spec R$ we have $s(\O_{X,x}, f^t) \leq {\mld}(x; X, f^t)$.
\end{theoremD}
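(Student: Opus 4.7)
The plan is to reduce the theorem to the bound $s(\O_{X,x}, f^t) \leq 1 + a_E - t\cdot \ord_E(f)$ for every prime divisor $E$ over $X$ whose center on $X$ equals the closure of $x$, where $a_E$ is the discrepancy of $E$ with respect to $K_X$; taking the infimum over all such $E$ then recovers $\mld(x;X,f^t)$ and proves the theorem. We may therefore localize at $x$ and assume $(R,\bm,k)$ is a $\bQ$-Gorenstein $F$-finite normal local domain. Fix such a divisor $E$ and a proper birational morphism $\pi\colon Y\to X$ with $Y$ regular in a neighborhood of $E$ on which $E$ appears as a Cartier divisor. Write $K_Y=\pi^{*}K_X+\sum_i a_iE_i$ and $\pi^{*}\Div(f)=\widetilde{\Div(f)}+\sum_i b_iE_i$, with $E=E_{i_0}$, $a_E=a_{i_0}$ and $b_E=b_{i_0}=\ord_E(f)$.

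For each $e$ with $p^e\equiv 1$ modulo the Gorenstein index of $R$, Grothendieck duality identifies $\sHom_{\O_X}(F^e_{*}\O_X,\O_X)$ with $F^e_{*}\O_X((1-p^e)K_X)$; the submodule $\sC_e^{f^t}$ of those homomorphisms that factor through multiplication by $F^e_{*}f^{\lceil t(p^e-1)\rceil}$ corresponds under this identification to $F^e_{*}\O_X(D_e)$, where $D_e := (1-p^e)K_X-\lceil t(p^e-1)\rceil\Div(f)$. By definition $a_e^{f^t}$ is the rank of a maximal free direct summand of $F^e_{*}R$ whose projections lie in $\sC_e^{f^t}$, and one checks that such projections remain linearly independent modulo $\bm$; hence $a_e^{f^t}$ is bounded above by the minimal number of $R$-generators $\dim_k\bigl(\sC_e^{f^t}/\bm\,\sC_e^{f^t}\bigr)$.

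The heart of the argument is to control this minimal number of generators via geometry on $Y$. Any $R$-generator of $\sC_e^{f^t}$ pulls back to a section of $F^e_{*}\O_Y(\pi^{*}D_e)$, and the coefficient of $E$ in $\pi^{*}D_e$ is $(p^e-1)a_E-\lceil t(p^e-1)\rceil b_E\approx (p^e-1)(a_E-tb_E)$. The requirement that a generator be non-zero modulo $\bm$ forces its pullback to be non-vanishing at the generic point of $\pi^{-1}(x)$, and in particular its order of vanishing along $E$ is controlled. A Riemann--Roch-type lattice-point count --- in the spirit of the length calculation underlying \autoref{prop.EasyComputationOfFSignatureForRegular} --- then produces the bound $a_e^{f^t}\leq (1+a_E-t\ord_E(f))\,p^{ed}+o(p^{ed})$. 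Dividing by $p^{ed}$ and passing to the limit in $e$ gives the desired inequality for the chosen $E$, and taking the infimum over $E$ completes the proof.

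The main technical obstacle is the lattice-point count: one must translate ``non-zero modulo $\bm$'' at $x$ into a sharp quantitative bound on orders of vanishing along the exceptional components of $\pi$, while absorbing the error terms from the ceiling $\lceil t(p^e-1)\rceil$ and from the non-Cartier contributions when the Gorenstein index of $R$ exceeds one. When the center of $E$ is positive-dimensional one additionally needs to cut by a generic transverse complete intersection through $x$ to reduce the count to a one-dimensional valuation problem.
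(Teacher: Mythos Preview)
Your approach has a genuine gap, and the route through the minimal number of generators $\mu(\sC_e^{f^t}) = \dim_k(\sC_e^{f^t}/\bm\,\sC_e^{f^t})$ cannot work as stated. Consider the case where $R$ is regular: then $\sC_e^{f^t} = \Hom_R(F^e_*R,R)\cdot F^e_*f^{\lceil t(p^e-1)\rceil}$ is isomorphic as an $R$-module to $\Hom_R(F^e_*R,R)\cong F^e_*R$ (the map $\varphi \mapsto \varphi(F^e_*f^N\cdot\blank)$ is an $R$-module isomorphism since $f$ is a nonzerodivisor), so $\mu(\sC_e^{f^t}) = p^{e(d+\alpha(R))}$ regardless of $f$ or $t$. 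Thus your intermediate inequality only yields $s(R,f^t)\leq 1$, which gives no information when the minimal log discrepancy is less than $1$. The subsequent ``Riemann--Roch-type lattice-point count'' is neither carried out nor clearly formulated, and since the quantity you are trying to bound does not actually see the divisor $E$, no such count can recover the discrepancy $1+a_E - t\cdot\ord_E(f)$ from $\mu(\sC_e^{f^t})$.

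The paper's argument avoids this entirely and is much more direct. Rather than counting generators, one shows (\autoref{prop.TransformationOfSplittingNumbersUnderBirational}) that every $(R,f^t)$-free summand of $F^e_*\O_{X,x}$ extends, via the divisor--map correspondence of Hara--Watanabe and Schwede, to a $(\O_{\tld X,\eta},\tld\Delta+tG)$-free summand of $F^e_*\O_{\tld X,\eta}$, where $\eta$ is the generic point of $E$ on a birational model $\pi:\tld X\to X$ and $\tld\Delta = -K_{\tld X}+\pi^*K_X$. This gives $a_{e,x}^{f^t}\leq a_{e,\eta}^{\tld\Delta+tG}$ directly. Since $\O_{\tld X,\eta}$ is a DVR, the $F$-signature on the right is computed exactly by the simple normal crossings formula \eqref{eq.SignatureOfSNC} as $1$ minus the coefficient of $E$ in $\tld\Delta+tG$, i.e.\ the log discrepancy. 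Using that $\dim+\alpha$ is constant along $\pi$ (Kunz), dividing by $p^{e(d+\alpha)}$ and taking the limit finishes the proof. The step you were missing is this transformation rule for splittings under birational maps; it replaces your vague lattice count with an explicit comparison to a one-dimensional regular local ring.
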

\noindent
The above result is both novel and interesting in the absolute case, though it once more generalizes to triples $(R, \Delta, \ba^t)$ as well; see \autoref{thm.F-sigLeqMLD}. We are also able to include several further improvements; in particular, see \autoref{cor.MldImprovement}.
\vskip 9pt
\noindent{Acknowledgements:}  The authors would like to thank Craig Huneke and Karen Smith for inspiring conversations.  The authors would also like to thank the all the referees and Mircea Musta{\c{t}}{\u{a}} for numerous useful comments on previous drafts.  The authors worked on this paper while visiting the Johannes Gutenberg-Universit\"at Mainz during the summers of 2010 and 2011.  These visits were funded by the SFB/TRR45 \emph{Periods, moduli, and the arithmetic of algebraic varieties}.

\section{The \texorpdfstring{$F$}{F}-signature for triples}
\label{sec.Defn}
We begin by fixing notation and recalling the definition of and formulae for the \mbox{$F$-signature} of a triple; in particular, the description found in \autoref{thm.ConvenientDefFsigTrip}  will suffice for a majority of this article.
Throughout this paper $R$ is a $d$-dimensional \mbox{$F$-finite} local normal domain of characteristic $p > 0$ with maximal ideal $\bm$ and residue field $k$. We are concerned with triples $(R,\Delta,\ba^t)$ where $\Delta$ is an effective \mbox{$\bR$-divisor} on $\Spec R$, and $\ba \subseteq R$ is a non-zero ideal with parameter $t \in \bR_{\geq 0}$; if $\Delta = 0$ or $\ba = R$, we will omit these terms from the notation. For any $R$-module $M$, we use $F^e_* M$ to denote the $R$-module which agrees with $M$ as an Abelian group but where $R$-multiplication is twisted by the $e$-iterated Frobenius $F^e \colon R \to R$. In other words, the $R$-module structure on $F^e_*M$ is given by $r\cdot m = r^{p^e} m$ for $r \in R$ and $m \in M$. The $F$-finiteness condition means precisely that $F^e_* R$ is a finitely generated $R$-module for every $e > 0$.

We define a \emph{$p^{-e}$-linear map} $\varphi \colon M \to M$ to be an $R$-linear map $\varphi : F^e_* M \to M$, \textit{i.e.} $\varphi$ is an additive endomorphism of $M$ satisfying $\varphi(r^{p^e}m)=r\varphi(m)$ for all $m \in M$ and $r \in R$. The prototypical example of a $p^{-e}$-linear map is a Frobenius splitting, or better still an associated projection map onto an $R$-module direct summand of $F^e_*R$ isomorphic to $R$.  The $F$-signature of $R$ is an asymptotic measure of the number of all such summands as $e$ increases.

More generally, to define the $F$-signature of a triple $(R,\Delta, \ba^{t})$ we limit the direct summands enumerated by taking into account the additional data of the triple. The effective $\bR$-divisor $\Delta$ yields an inclusion
\[
    R \subseteq R(\lceil (p^e-1) \Delta \rceil) \colonequals  \{ f  \,|\, f \in \text{Frac}(R)
    \text{ with }\operatorname{div}(f) +\lceil (p^e-1)\Delta \rceil \geq 0 \}
\]
and induces (by restriction) an inclusion $\Hom_R(F^e_*R(\lceil (p^e-1) \Delta \rceil), R) \subseteq \Hom_R(F^e_*R,R)$. A free direct summand $R^{\oplus n} \subseteq F^e_*R$ is called an \emph{$(R,\Delta)$-summand} if each of the associated projection maps $\varphi \in \Hom_R(F^e_*R,R)$ belongs to $\Hom_R(F^e_*R(\lceil (p^e-1) \Delta \rceil), R)$. If additionally, all the  associated projection maps $\varphi \in \Hom_R(F^e_*R,R)$ lie in the submodule \mbox{$\Hom_R(F^e_*R(\lceil (p^e-1) \Delta \rceil), R) \cdot F^e_*\ba^{\lceil t(p^e-1) \rceil}$} then it is called a $(R,\Delta,\ba^t)$-summand\footnote{In other words,  each $\varphi$ can be written as a sum $\sum \psi _i( F^e_*a_i \cdot \underline{\phantom{m}})$ for some $a_i \in \ba^{\lceil t(p^e-1) \rceil}$ and $\psi_i \in \Hom_R(F^e_*R(\lceil (p^e-1) \Delta \rceil), R)$.}. Thus, we are led to the definition of the $F$-signature for triples.
\begin{definition}\cite{BlickleSchwedeTuckerFSigPairs1}
The \emph{$e$-th $F$-splitting number of a triple $(R, \Delta, \ba^t)$} is the maximal rank $a_{e}^{\Delta, \ba^t}$ of a free $(R,\Delta,\ba^t)$-summand  of $F^{e}_{*}R$ as an $R$-module. If $d=\dim R$ and $\alpha(R)=\log_p [k:k^p]$, the limit
\[
s(R,\Delta, \ba^t) = \lim_{e \to \infty}
\frac{a_{e}^{\Delta, \ba^t}}{p^{e(d+\alpha(R))}}
\]
exists and is called the \emph{$F$-signature} of $(R, \Delta, \ba^t)$.
\end{definition}

A key result about the $F$-signature states that $s(R,\Delta,\ba^t)$ is positive if and only if the triple $(R,\Delta,\ba^t)$ is strongly $F$-regular \cite[Theorem 3.18]{BlickleSchwedeTuckerFSigPairs1} (\cf~\cite{AberbachLeuschke}). In this case, $R$ itself is strongly $F$-regular and hence normal and Cohen-Macaulay.

Prior to the last section, we shall rely solely on the following explicit description of the $F$-signature for triples obtained in \cite{BlickleSchwedeTuckerFSigPairs1}. As in \textit{loc. cit.} Definition 3.3, set
\[
    I^\Delta_e \colonequals \{ r \in R\, |\, \phi(r) \in \bm \text{ for all } \phi \in \Hom_R(F^e_*R(\lceil (p^e-1)\Delta \rceil),R) \}.
\]
\begin{proposition}\label{thm.ConvenientDefFsigTrip}
Let $(R,\Delta,\ba^t)$ be a triple.  According to \cite[Proposition 3.5]{BlickleSchwedeTuckerFSigPairs1}, we have
$a_e^{\Delta,\ba^t} =p^{e\alpha(R)}\length(R/(I^\Delta_e:\ba^{\lceil t(p^e-1) \rceil}))$ and hence
\begin{align*}
s(R,\Delta,\ba^t) &= \lim_{e \to \infty} \frac{1}{p^{ed}}\length(R/(I^\Delta_e:\ba^{\lceil t(p^e-1) \rceil})). \\
\intertext{Additionally, using \cite[Proposition 4.17]{BlickleSchwedeTuckerFSigPairs1}, we also have}
s(R,\Delta,\ba^t) &=\lim_{e \to \infty} \frac{1}{p^{ed}}\length(R/(I^\Delta_e:\ba^{\lceil tp^e \rceil})).
\end{align*}
\end{proposition}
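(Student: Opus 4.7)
The plan is to unpack how the two cited results from \cite{BlickleSchwedeTuckerFSigPairs1} yield the displayed formulas; since both statements are essentially restatements, the argument is largely bookkeeping. First I would recall the meaning of the ideal $I^\Delta_e$: an element $r \in R$ lies in $I^\Delta_e$ precisely when every $R$-linear map $\phi \colon F^e_*R(\lceil(p^e-1)\Delta\rceil) \to R$ sends $r$ into the maximal ideal $\bm$. Via a Matlis-duality / socle-counting argument (this is the content of \cite[Proposition 3.5]{BlickleSchwedeTuckerFSigPairs1} in the case $\ba = R$), the length $\length(R/I^\Delta_e)$ measures, up to the factor $p^{e\alpha(R)}$ accounting for $[k:k^p]$, the maximal rank of a free $(R,\Delta)$-summand of $F^e_*R$.

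To incorporate the ideal $\ba$ with coefficient $t$, I would observe that an $R$-linear map $\phi \colon F^e_*R \to R$ lies in the submodule $\Hom_R(F^e_*R(\lceil(p^e-1)\Delta\rceil),R)\cdot F^e_*\ba^{\lceil t(p^e-1)\rceil}$ precisely when it may be written as a sum $\sum_i \psi_i(F^e_*a_i\cdot \underline{\phantom{m}})$ with $a_i \in \ba^{\lceil t(p^e-1)\rceil}$ and $\psi_i \in \Hom_R(F^e_*R(\lceil(p^e-1)\Delta\rceil),R)$. Such a composite map sends $r \in R$ into $\bm$ for every choice of $\psi_i$ and $a_i$ exactly when $a\cdot r \in I^\Delta_e$ for every $a \in \ba^{\lceil t(p^e-1)\rceil}$, that is, precisely when $r \in (I^\Delta_e : \ba^{\lceil t(p^e-1)\rceil})$. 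Combined with the case $\ba = R$, this produces the first equality
\[
a_e^{\Delta,\ba^t} = p^{e\alpha(R)}\length\bigl(R/(I^\Delta_e : \ba^{\lceil t(p^e-1)\rceil})\bigr),
\]
and then the first limit formula for $s(R,\Delta,\ba^t)$ follows by dividing by $p^{e(d+\alpha(R))}$ and invoking the existence of the limit established in \cite{BlickleSchwedeTuckerFSigPairs1}.

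For the second limit formula I would invoke \cite[Proposition 4.17]{BlickleSchwedeTuckerFSigPairs1}, which replaces the exponent $\lceil t(p^e-1)\rceil$ by $\lceil tp^e\rceil$ without affecting the limit. The underlying idea is that the difference $\lceil tp^e\rceil - \lceil t(p^e-1)\rceil$ is bounded independently of $e$, so the two colon ideals agree up to multiplication by a bounded power of $\ba$; the resulting change in the length of the quotient is $O(p^{e(d-1)})$ and therefore negligible after normalizing by $p^{ed}$.

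The only substantive point, which is essentially already handled in the cited propositions, is the identification of the combinatorial quantity $a_e^{\Delta,\ba^t}$ (the maximal rank of a free summand subject to the triple-compatibility constraint) with the purely algebraic length of a colon quotient; this is the step where the $F$-finiteness and the Matlis duality between socles of $F^e_*R$ and free summands enter crucially. Since we are merely assembling the statement for later use, I would present the argument as a two-line application of \cite[Propositions 3.5 and 4.17]{BlickleSchwedeTuckerFSigPairs1}.
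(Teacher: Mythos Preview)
Your proposal is correct and matches the paper's approach: the paper gives no proof at all for this proposition, treating it purely as a restatement of \cite[Propositions 3.5 and 4.17]{BlickleSchwedeTuckerFSigPairs1}. Your unpacking of those citations---identifying $(I^\Delta_e : \ba^{\lceil t(p^e-1)\rceil})$ via the condition that every allowable $\phi$ sends $r$ into $\bm$, and noting that the bounded discrepancy between $\lceil t(p^e-1)\rceil$ and $\lceil tp^e\rceil$ contributes only $O(p^{e(d-1)})$---is accurate and is exactly the content behind those cited results, so you have simply made explicit what the paper leaves implicit.
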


\begin{remark}
The definition of the $F$-signature is further generalized in \cite{BlickleSchwedeTuckerFSigPairs1} using the notion of a Cartier subalgebra on $R$. Let us briefly recall this often convenient point of view: the \emph{total Cartier algebra on $R$} is the non-commutative graded ring
\[
    \sC^R = \bigoplus_{e=0}^\infty \sC^R_e
\]
where $\sC^R_e = \Hom_R(F^e_*R,R)$ and multiplication is given by composition of additive maps. A \emph{Cartier subalgebra} is a graded subring $\sD$ of $\sC^R$ with $\sD_0=R$. One says that a free direct summand of $F^e_*R$ is a $(R,\sD)$-summand if all associated projection maps $F^e_*R \to R$ belong to $\sD_e$. The $F$-signature of $(R,\sD)$ is then roughly\footnote{For a general Cartier subalgebra, one has to be slightly more careful when taking the limit in that some $\sD_e$ might be zero. This pathology does not occur in the case that $\sD$ comes from a triple.} given by
\[
 s(R,\sD) = \lim_{e \to \infty} {a_e^\sD \over p^{e(d+\alpha(R))}}
\]
where $a_e^\sD$ is the maximal rank of a $(R,\sD)$-free summand of $F^e_*R$. With this setup it is easy to verify that both $\sC^{\Delta} = \bigoplus \sC^{\Delta}_{e}$ with $\sC^{\Delta}_e \colonequals \Hom_R(F^e_*R(\lceil (p^e-1) \Delta \rceil),R)$ and $\sC^{\Delta, \ba^{t}} = \oplus \sC^{\Delta, \ba^{t}}_{e}$ with $\sC^{\Delta,\ba^t}_e \colonequals \sC^\Delta_e \cdot F^e_* \ba^{\lceil t(p^e-1)\rceil}$ form Cartier subalgebras. See \cite{BlickleSchwedeTuckerFSigPairs1,SchwedeTuckerTestIdealSurvey} for details.
\end{remark}

\section{Continuity and convexity of \texorpdfstring{$F$}{F}-signature}
\label{sec.Continuity}

An important basic case  of $F$-signature of pairs is that of a simple normal crossing divisor, \textit{i.e.} where $R = k\llbracket x_1, \dots, x_n\rrbracket$ and $\Delta = t_1 \Div_X(x_1) + \ldots + t_n \Div_X(x_n)$.  As worked out in \cite[Example 4.19]{BlickleSchwedeTuckerFSigPairs1}, we have
\begin{equation}
\label{eq.SignatureOfSNC}
s(R, \Delta) = (1-t_1)(1-t_2) \dots (1-t_n)
\end{equation}
so long as $t_i \leq 1$ for all $i$ (otherwise the signature is zero). In fact, this is but a special case of the formula for the $F$-signature of monomial ideals.

\begin{theorem}\cite[Theorem 4.20]{BlickleSchwedeTuckerFSigPairs1}
Let $R = k \llbracket x_{1}, \ldots, x_{n} \rrbracket$ where $k$ is an \mbox{$F$-finite} field of characteristic $p >0$.
Suppose $\ba \subseteq R$ is a monomial ideal with Newton polyhedron $\bP_{\ba} \subseteq \R^{ n}$ and $t \in \R_{> 0}$.  Then the $F$-signature of $(R, \ba^{t})$
\[
s(R, \ba^{t}) = \vol_{\R^{ n}}(t\bP_{\ba} \cap [0,1]^{ n})
\]
equals the Euclidean volume of the intersection of $t \bP_{\ba}$ with the unit cube $[0,1]^{ n} \subseteq \R^{n}$.  
\end{theorem}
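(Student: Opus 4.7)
The plan is to invoke \autoref{thm.ConvenientDefFsigTrip} and to express both sides as a common volume computed via a monomial lattice-point count. Since $R$ is regular with $\Delta = 0$, the ideal $I_e^0$ equals the Frobenius power $\bm^{[p^e]} = (x_1^{p^e},\ldots,x_n^{p^e})$: indeed, $r\notin I_e^0$ iff some projection $F^e_*R \to R$ sends $F^e_*r$ to a unit, which happens iff $r\notin\bm^{[p^e]}$. Combined with \autoref{thm.ConvenientDefFsigTrip} this yields
\[
s(R,\ba^t) \;=\; \lim_{e\to\infty} \frac{1}{p^{en}}\,\length\!\big(R/(\bm^{[p^e]} : \ba^{\lceil tp^e \rceil})\big).
\]

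Since $\bm^{[p^e]}$ and $\ba^{\lceil tp^e \rceil}$ are monomial, so is the colon, and the quotient has a $k$-basis indexed by monomials $x^\alpha$ with $0\leq\alpha_i<p^e$ that avoid the colon. Such an $x^\alpha$ avoids the colon precisely when there exists an exponent $\beta$ of a monomial in $\ba^{\lceil tp^e\rceil}$ with $\alpha+\beta\in[0,p^e)^n$ componentwise. I then rescale by $\alpha'=\alpha/p^e$ and $\beta'=\beta/p^e$. The Newton polyhedron of $\ba^{\lceil tp^e\rceil}$ is $\lceil tp^e\rceil\,\bP_\ba$, so as $e\to\infty$ the rescaled exponent sets become dense in $t\bP_\ba$ (one should also note that the asymptotic discrepancy between $\mathrm{exp}(\ba^m)$ and $m\bP_\ba\cap\bZ^n$, i.e.\ between $\ba^m$ and its integral closure, is negligible). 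A standard Riemann-sum argument, valid because the limit region is semi-algebraic with measure-zero boundary, then gives
\[
s(R,\ba^t) \;=\; \vol_{\bR^n}\!\big(\{\alpha'\in[0,1]^n : \exists\,\beta'\in t\bP_\ba \text{ with } \alpha'+\beta'\in[0,1]^n\}\big).
\]

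To identify this with $\vol_{\bR^n}(t\bP_\ba\cap[0,1]^n)$, I apply the volume-preserving substitution $\gamma := (1,\ldots,1) - \alpha'$ on $[0,1]^n$. The condition on $\alpha'$ transforms into: there exists $\beta'\in t\bP_\ba$ with $\beta'\leq\gamma$ componentwise. Since $t\bP_\ba$ is an up-closed subset of $\bR_{\geq 0}^n$ (Newton polyhedra have the form ``compact $+\,\bR_{\geq 0}^n$''), the existence of such a $\beta'$ is equivalent to $\gamma\in t\bP_\ba$ (up to a boundary of measure zero, which swallows the distinction between the strict and non-strict half-spaces). Thus the region parametrized by $\gamma$ coincides with $t\bP_\ba\cap[0,1]^n$ up to null sets, yielding the claimed formula. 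The main obstacle is the lattice-point-to-volume transition in the second paragraph, which requires careful handling of both the boundary estimates for lattice counts in dilated polyhedra and the comparison between $\ba^m$ and its integral closure; both are routine but deserve explicit attention.
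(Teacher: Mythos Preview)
The paper does not prove this statement: it is quoted verbatim as \cite[Theorem 4.20]{BlickleSchwedeTuckerFSigPairs1} and used only as a motivating example at the start of Section~\ref{sec.Continuity}. There is therefore no proof in the present paper to compare your argument against.

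That said, your outline is the standard and correct route. The identification $I_e^{0}=\bm^{[p^e]}$ in the regular case is exactly what the paper records just before \autoref{prop.EasyComputationOfFSignatureForRegular}, and once one has
\[
s(R,\ba^t)=\lim_{e\to\infty}\frac{1}{p^{en}}\,\#\{\alpha\in[0,p^e)^n\cap\bZ^n:\exists\,\beta\in\mathrm{exp}(\ba^{\lceil tp^e\rceil})\text{ with }\alpha+\beta\in[0,p^e)^n\},
\]
the remaining steps are essentially a Riemann-sum argument. Your reflection $\gamma=(1,\dots,1)-\alpha'$ together with the up-closedness of $t\bP_\ba$ is correct: if some $\beta'\in t\bP_\ba$ satisfies $\beta'\leq\gamma$ then $\gamma\in t\bP_\ba$ by up-closure, and conversely $\beta'=\gamma$ witnesses the existence. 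The two points you flag as needing care are genuine but routine: the discrepancy between $\mathrm{exp}(\ba^m)$ and $m\bP_\ba\cap\bZ^n$ is controlled by a Brian\c{c}on--Skoda type bound (for monomial ideals one has $\overline{\ba}^{\,m+n}\subseteq\ba^m$), and the boundary of the region $t\bP_\ba\cap[0,1]^n$ is contained in finitely many hyperplanes, so the lattice-point count converges to the volume. With those details filled in your argument is complete.
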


This result has recently been generalized to toric varieties by M.~Von Korff in \cite{VonKorffFSigOfAffineToric} (\cf~ \cite{WatanabeYoshidaMinimalRelativeHKMult,SinghFSignatureOfAffineSemigroup}). What one notices in any of these basic cases is that the \emph{signature function}
\[
t \mapsto s(R, \ba^t)
\]
is a continuous function of $t$. In this section, we verify this property in general, even after additionally incorporating a divisor $\Delta$. We additionally prove that in the case that $\ba$ is principal, that the signature function is convex.

\begin{theorem}\label{thm.Continuity}
  Suppose $R$ is a $d$-dimensional $F$-finite local normal domain with dimension $d$, $\Delta$ is an effective $\R$-divisor on $\Spec(R)$, and $\ba \subseteq R$ is a non-zero ideal.  Then the function $t \mapsto s(R,\Delta,\ba^{t})$ is continuous.
\end{theorem}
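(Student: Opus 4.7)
The plan is to prove the stronger statement that $t\mapsto s(R,\Delta,\ba^t)$ is Lipschitz on $[0,\infty)$, from which continuity follows at once. I would work throughout with the formula
\[
s(R,\Delta,\ba^t) \;=\; \lim_{e\to\infty}\frac{1}{p^{ed}}\length\bigl(R/(I^\Delta_e:\ba^{\lceil tp^e\rceil})\bigr)
\]
provided by \autoref{thm.ConvenientDefFsigTrip}. The case $\ba=R$ being trivial (the function is then constant), fix once and for all a nonzero element $f\in \ba\cap \bm$, which exists since $R$ is a domain and $\ba$ is a nonzero proper ideal. For each $e$, abbreviate $J_{e,j}\colonequals(I^\Delta_e:\ba^j)$ and $N_e(t)\colonequals\length\bigl(R/J_{e,\lceil tp^e\rceil}\bigr)$; the chain $J_{e,j}\subseteq J_{e,j+1}$ is monotone in $j$.

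For $0\leq s\leq t$ this gives the telescoping identity
\[
N_e(s)-N_e(t) \;=\; \sum_{j=\lceil sp^e\rceil}^{\lceil tp^e\rceil-1}\length\bigl(J_{e,j+1}/J_{e,j}\bigr),
\]
so the crux is to bound the successive quotients uniformly in $j$. Note that $J_{e,j+1}/J_{e,j} = (0:_{R/J_{e,j}}\ba) \subseteq (0:_{R/J_{e,j}}f) = (J_{e,j}:f)/J_{e,j}$, and the four-term exact sequence induced by multiplication by $f$ on $R/J_{e,j}$ gives $\length((J_{e,j}:f)/J_{e,j}) = \length(R/(J_{e,j}+(f)))$. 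Because $J_{e,j}\supseteq I^\Delta_e \supseteq \bm^{[p^e]}$, this latter length is at most $\length(R/(\bm^{[p^e]}+(f)))$, which equals the Hilbert--Kunz length of the local ring $R/(f)$ at level $e$. Since $R$ is a domain and $0\neq f\in\bm$, the ring $R/(f)$ has dimension $d-1$, so by the standard Hilbert--Kunz bound there exists a constant $C$ (depending only on $f$) with $\length(R/(\bm^{[p^e]}+(f))) \leq C\,p^{e(d-1)}$ for every $e$.

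Combining the bounds with $\lceil tp^e\rceil - \lceil sp^e\rceil \leq (t-s)p^e + 1$ yields
\[
N_e(s)-N_e(t) \;\leq\; C(t-s)p^{ed} \,+\, C\,p^{e(d-1)},
\]
so dividing by $p^{ed}$ and letting $e\to\infty$ gives the one-sided Lipschitz estimate $s(R,\Delta,\ba^s)-s(R,\Delta,\ba^t) \leq C(t-s)$; combined with the obvious monotonicity of $s(R,\Delta,\ba^t)$ in $t$, this produces the Lipschitz (hence continuity) statement on $[0,\infty)$. The only nontrivial ingredient I expect to cite is the Hilbert--Kunz growth estimate, an immediate consequence of Monsky's theorem applied to $R/(f)$; every other step is elementary bookkeeping driven by the length formula of \autoref{thm.ConvenientDefFsigTrip}, and in particular no principality assumption on $\ba$ is required (that hypothesis enters only later, for the convexity result of Theorem B).
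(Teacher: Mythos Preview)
Your proof is correct and shares the paper's core strategy: pick a nonzero element of $\ba$, bound the colon by $\ba$ via the colon by that single element, and use the kernel$=$cokernel identity for multiplication on a finite-length module. The organizational difference---your telescoping in unit steps of $j$ versus the paper's direct comparison of $t$ and $t+1/p^{e_0}$ via multiplication by $x_1^{p^{e-e_0}}$---is cosmetic. The substantive difference lies in the final length bound. The paper first reduces to $R$ strongly $F$-regular (else the signature is identically zero), hence Cohen--Macaulay, extends $x_1\in\ba$ to a full system of parameters $x_1,\ldots,x_d$, and uses the regular-sequence identity $\length(R/\langle x_1^{n_1},\ldots,x_d^{n_d}\rangle)=n_1\cdots n_d\,\length(R/\langle x_1,\ldots,x_d\rangle)$ to obtain the explicit Lipschitz constant $\length(R/\langle x_1,\ldots,x_d\rangle)$. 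You instead bound $\length(R/(\bm^{[p^e]}+(f)))\leq Cp^{e(d-1)}$ by invoking Monsky's Hilbert--Kunz theorem on the $(d-1)$-dimensional ring $R/(f)$. Your route avoids the Cohen--Macaulay reduction at the cost of importing Monsky's theorem; the paper's route is more self-contained once one accepts that strongly $F$-regular rings are Cohen--Macaulay, and it yields a cleaner explicit constant.
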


\begin{proof}
  Without loss of generality, we may assume $R$ is strongly $F$-regular (else the function in question is identically zero) and hence Cohen-Macaulay.  Choose a non-zero element $x_{1} \in \ba$, and complete it to a system of parameters $x_{1}, x_{2}, \ldots, x_{d}$ for $R$.  In particular, $x_{1}, x_{2}, \ldots, x_{d}$ form a regular sequence, so that
\begin{equation}
\label{eq:1}
\length_{R}\left(R/ \langle x_{1}^{n_{1}}, x_{2}^{n_{2}},\ldots, x_{d}^{n_{d}} \rangle \right) = n_{1} n_{2} \cdots n_{d} \length_{R}\left(R/\langle x_{1}, x_{2}, \ldots, x_{d} \rangle \right)
\end{equation}
for any $n_{1}, n_{2}, \ldots, n_{d} \in \ZZ_{>0}$.

Using the description of the $F$-signature recalled above in \autoref{thm.ConvenientDefFsigTrip} we have that
\[
s(R,\Delta, \ba^{t}) = \lim_{e \to \infty} \frac{1}{p^{ed}} \length_{R}\left( R / (I_{e}^{\Delta} : \ba^{\lceil tp^{e} \rceil} )\right) .
\]
Fix $e_{0}\in \ZZ_{>0}$; for $e \geq e_{0}$, we have that
\begin{equation}
\label{eqn.DifferenceEqualsColon}
\begin{array}{rl}
0 \leq & \displaystyle \length_{R}\left( R / (I_{e}^{\Delta} : \ba^{\lceil tp^{e} \rceil} )\right)  - \length_{R}\left( R / (I_{e}^{\Delta} : \ba^{\lceil (t+\frac{1}{p^{e_{0}}})p^{e} \rceil} )\right) \medskip \\
 = & \displaystyle \length_{R}\left( \frac{\left( I_{e}^{\Delta} : \ba^{\lceil (t+\frac{1}{p^{e_{0}}})p^{e} \rceil}  \right)}{(I_{e}^{\Delta} : \ba^{\lceil tp^{e} \rceil} )}\right) \; \;
 = \; \; \displaystyle \length_{R}\left( \frac{\left( (I_{e}^{\Delta} : \ba^{\lceil tp^{e} \rceil} ): \ba^{p^{e-e_{0}}} \right)}{(I_{e}^{\Delta} : \ba^{\lceil tp^{e} \rceil} )}\right).
\end{array}
\end{equation}
Now, since $x_{1} \in \ba$, we have
\begin{equation}
\label{eq:smalldiff}
\begin{array}{rl}
\length_{R}\left( \frac{\left( (I_{e}^{\Delta} : \ba^{\lceil tp^{e} \rceil} ): \ba^{p^{e-e_{0}}} \right)}{(I_{e}^{\Delta} : \ba^{\lceil tp^{e} \rceil} )}\right) &\leq \length_{R}\left( \frac{\left( (I_{e}^{\Delta} : \ba^{\lceil tp^{e} \rceil} ): x_{1}^{p^{e-e_{0}}} \right)}{(I_{e}^{\Delta} : \ba^{\lceil tp^{e} \rceil} )}\right) \vspace{6pt}\\
& = \length_{R} \left( \frac{R}{\langle x_{1}^{p^{e-e_{0}}},  (I_{e}^{\Delta} : \ba^{\lceil tp^{e} \rceil} )\rangle}\right) \vspace{6pt}\\
& \leq \quad \frac{p^{ed}}{p^{e_{0}}}\length_{R}\left( \frac{R}{\langle x_{1}, \ldots, x_{d} \rangle}\right).
\end{array}
\end{equation}
where the equality in the second line comes from the fact that the lengths of the kernel and cokernel of multiplication by $x_{1}^{p^{e-e_{0}}}$ on $R/ (I_{e}^{\Delta} : \ba^{\lceil tp^{e} \rceil} )$ must be equal, and the inequality in the third is just by the inclusion
\[
x_{2}^{p^{e}}, \ldots, x_{d}^{p^{e}} \in \bm^{[p^{e}]} \subseteq I_{e}^{\Delta} \subseteq (I_{e}^{\Delta} : \ba^{\lceil tp^{e} \rceil} )
\]
and \autoref{eq:1}.  Thus, dividing by $p^{ed}$ and letting $e \to \infty$ gives
\[
0 \leq s(R, \Delta, \ba^{t}) - s(R, \Delta, \ba^{t + \frac{1}{p^{e_{0}}}}) \leq \frac{1}{p^{e_{0}}}\length_{R}\left( \frac{R}{\langle x_{1}, \ldots, x_{d} \rangle}\right).
\]
Using that $s(R,\Delta,\ba^{t})$ is non-increasing in $t$, continuity now follows immediately.
\end{proof}

The proof also shows that:

\begin{corollary}
With the notation above, $s(R,\Delta,\ba^{t})$ is a Lipschitz function in $t$ with constant $\length_{R}(R/\langle x_{1}, \ldots, x_{d} \rangle)$.
\end{corollary}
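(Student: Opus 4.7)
The plan is to read a Lipschitz estimate directly out of the proof of \autoref{thm.Continuity} and then bootstrap it from $p$-adic increments to arbitrary real increments. First I would observe that the displayed inequality at the end of the proof of \autoref{thm.Continuity} asserts
\[
0 \le s(R,\Delta,\ba^{t}) - s(R,\Delta,\ba^{t+1/p^{e_0}}) \le \frac{1}{p^{e_0}}\,\length_R(R/\langle x_1,\dots,x_d\rangle)
\]
for every $t \ge 0$ and every $e_0 \ge 1$, since the argument there applies with the same choice of parameters $x_1,\dots,x_d$ regardless of the base-point $t$. Writing $L = \length_R(R/\langle x_1,\dots,x_d\rangle)$ and telescoping this inequality along the sequence $t, t+1/p^{e_0}, t+2/p^{e_0}, \dots, t+a/p^{e_0}$ yields
\[
0 \le s(R,\Delta,\ba^{t}) - s(R,\Delta,\ba^{t+a/p^{e_0}}) \le \frac{a}{p^{e_0}}\,L
\]
for every $a \in \mathbb Z_{\ge 0}$, which is precisely the Lipschitz estimate with constant $L$ when the increment $a/p^{e_0}$ is a nonnegative $p$-adic rational.

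To upgrade to arbitrary real increments, I would fix $0 \le t \le s$ and $\varepsilon > 0$, and use density of $p$-adic rationals in $\mathbb R$ to choose $e_0$ and $a \in \mathbb Z_{\ge 0}$ with $s \le t + a/p^{e_0} \le s + \varepsilon$. Since $t' \mapsto s(R,\Delta,\ba^{t'})$ is monotone non-increasing, the previous display gives
\[
0 \le s(R,\Delta,\ba^{t}) - s(R,\Delta,\ba^{s}) \le s(R,\Delta,\ba^{t}) - s(R,\Delta,\ba^{t+a/p^{e_0}}) \le \frac{a}{p^{e_0}}\,L \le (s-t+\varepsilon)\,L,
\]
and letting $\varepsilon \to 0$ produces $|s(R,\Delta,\ba^{s}) - s(R,\Delta,\ba^{t})| \le L\,|s-t|$, which is the claim.

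There is essentially no obstacle here, since the technical heart of the estimate—the length comparison in \autoref{eq:smalldiff}—is already carried out inside the proof of \autoref{thm.Continuity}. The only content of the corollary is the purely formal telescoping and density argument sketched above.
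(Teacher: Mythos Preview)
Your argument is correct and is precisely a fleshed-out version of what the paper means by ``the proof also shows that'': the paper gives no separate proof of the corollary, simply noting that the displayed inequality at the end of the proof of \autoref{thm.Continuity} already encodes the Lipschitz bound. Your telescoping and density/monotonicity step is the natural way to make this explicit, and matches the intended reading.
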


\begin{remark}
  Furthermore, if $\sD$ is a Cartier subalgebra on $R$, essentially the same proof shows that the function $s(R,\sD,\ba^{t})$ is continuous in $t$ (replacing $I_{e}^{\Delta}$ with $I_{e}^{\sD}$ throughout, and using only those $e$ which appear in the semigroup of $\sD$, see \cite{BlickleSchwedeTuckerFSigPairs1}).
\end{remark}

A close inspection of the proof of the continuity in \autoref{thm.Continuity} shows that in the case that $\ba = \langle f \rangle$ is principal one can do better: The function $t \mapsto s(R, \Delta, f^t)$ is convex -- which implies that the signature function is differentiable at all but countably many points.

\begin{theorem}\label{thm.Convexity}
Suppose that $R$ is an $F$-finite normal local domain, $\Delta$ is an effective \mbox{$\bR$-divisor} on $\Spec(R)$,  and $f \in R$ is a non-zero element. Then the function $t \mapsto s(R,\Delta,f^{t})$ is convex on the interval $[0, \infty)$.
\end{theorem}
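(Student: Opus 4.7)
The plan is to refine the proof of \autoref{thm.Continuity}. For $t \geq 0$ and $e \geq 0$, set $J_e(t) \colonequals (I_e^\Delta : f^{\lceil t p^e\rceil})$ so that \autoref{thm.ConvenientDefFsigTrip} gives $s(R,\Delta,f^t) = \lim_{e\to\infty} p^{-ed}\length_R(R/J_e(t))$, and observe the tautological inclusion $J_e(t_1) \subseteq J_e(t_2)$ whenever $t_1 \leq t_2$. I will establish at the finite level that the forward differences $\length_R(R/J_e(t)) - \length_R(R/J_e(t + 1/p^{e_0}))$ are monotone non-increasing in $t$, and then pass to the limit and bootstrap via continuity.

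Fix $e_0 \geq 1$, $e \geq e_0$, and $t_1 \leq t_2$. Since $\bm^{[p^e]} \subseteq I_e^\Delta \subseteq J_e(t_i)$, each module $R/J_e(t_i)$ has finite length, so the kernel and cokernel of multiplication by $f^{p^{e-e_0}}$ on $R/J_e(t_i)$ have the same length. Combined with the identity $(J_e(t_i) : f^{p^{e-e_0}}) = J_e(t_i + 1/p^{e_0})$ (valid because $p^{e-e_0}$ is an integer), this yields
\[
\length_R(R/J_e(t_i)) - \length_R(R/J_e(t_i + 1/p^{e_0})) = \length_R(R/\langle J_e(t_i), f^{p^{e-e_0}}\rangle).
\]
The inclusion $J_e(t_1) \subseteq J_e(t_2)$ now forces $\length_R(R/\langle J_e(t_1), f^{p^{e-e_0}}\rangle) \geq \length_R(R/\langle J_e(t_2), f^{p^{e-e_0}}\rangle)$, so dividing by $p^{ed}$ and letting $e \to \infty$ produces
\[
s(R,\Delta,f^{t_1}) - s(R,\Delta,f^{t_1 + 1/p^{e_0}}) \geq s(R,\Delta,f^{t_2}) - s(R,\Delta,f^{t_2 + 1/p^{e_0}}).
\]

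Telescoping this inequality along the arithmetic progression with step $1/p^{e_0}$ gives the same statement with $1/p^{e_0}$ replaced by any $h = k/p^{e_0}$ for $k \in \bZ_{>0}$. Such $h$ are dense in $(0,\infty)$, so \autoref{thm.Continuity} promotes the inequality to every $h > 0$. Setting $t_2 = t_1 + h$ then yields the midpoint inequality $s(R,\Delta,f^{t_1}) + s(R,\Delta,f^{t_1 + 2h}) \geq 2 s(R,\Delta,f^{t_1 + h})$, which together with continuity is equivalent to convexity on $[0,\infty)$. The only real obstacle is conceptual rather than technical: the coarse estimate on $\length_R(R/\langle J_e(t), f^{p^{e-e_0}}\rangle)$ used in the continuity proof must be upgraded to the genuine monotonicity above, and this monotonicity rests essentially on the principality of $\ba = \langle f\rangle$ (for more general $\ba$ the corresponding colon ideals are not related by a single fixed power and the clean telescoping breaks down).
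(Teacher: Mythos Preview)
Your proof is correct and follows essentially the same approach as the paper: both arguments hinge on the identity
\[
\length_R\bigl(R/J_e(t)\bigr) - \length_R\bigl(R/J_e(t+1/p^{e_0})\bigr) = \length_R\bigl(R/\langle J_e(t), f^{p^{e-e_0}}\rangle\bigr),
\]
which comes from the equality of kernel and cokernel lengths and which is exactly where principality of $\langle f\rangle$ is used, together with the monotonicity $J_e(t_1)\subseteq J_e(t_2)$ and an appeal to continuity on a dense set. The only cosmetic differences are that the paper restricts at the outset to $t$ of the form $a/p^c$ and compares consecutive difference quotients $D(a/p^c,(a+1)/p^c)$ directly, whereas you work with arbitrary real $t$ at the finite level (using $\lceil tp^e + p^{e-e_0}\rceil = \lceil tp^e\rceil + p^{e-e_0}$) and finish via midpoint convexity plus continuity; these are equivalent packagings of the same argument.
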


\begin{proof}
If $0 \leq t < t_{1} < t_{2}$, we need to verify
\[
D(t, t_{1}) = \frac{s(R,\Delta,f^{t_{1}}) - s(R,\Delta,f^{t})}{t_{1} - t} \leq \frac{s(R,\Delta,f^{t_{2}}) - s(R,\Delta,f^{t_1})}{t_{2} - t_1} = D(t_1, t_{2}).
\]
By continuity, it suffices to check this inequality on the dense set of rational numbers whose denominators are powers of $p$.  Thus, for some $c  \in \bZ_{>0}$, we will assume that
\[
t = \frac{a}{p^{c}} \qquad t_{1} = \frac{a_{1}}{p^{c}} \qquad t_2=\frac{a_{2}}{p^{c}}
\]
where $a < a_{1} < a_{2} \in \Z_{>0}$. In fact, it is enough to treat the case $a_1=a+1, a_2=a+2$.

Following the proof of \autoref{thm.Continuity} we notice that the first inequality in Equation \autoref{eq:smalldiff} is an equality whenever $\ba=\langle f \rangle$ is principal. Combining this with Equation \autoref{eqn.DifferenceEqualsColon} we obtain
\[
\length_{R}\left( R / (I_{e}^{\Delta} : f^{\lceil (\frac{a}{p^{c}})p^{e} \rceil} )\right)  - \length_{R}\left( R / (I_{e}^{\Delta} : f^{\lceil (\frac{a+1}{p^{c}})p^{e} \rceil} )\right)\vspace{6pt}
 =  \length_{R}\left(  \frac{R}{\langle (I_{e}^{\Delta}:f^{ap^{e-c}}),f^{p^{e-c}}\rangle}\right).
\]
Dividing through by $p^{ed}$ and taking limits as $e \to \infty$ then gives
\[
s(R,\Delta, f^{\frac{a}{p^{c}}}) - s(R,\Delta,f^{\frac{a+1}{p^{c}}}) = \lim_{e \to \infty}\frac{1}{p^{ed}}\length_{R}\left(  \frac{R}{\langle (I_{e}^{\Delta}:f^{ap^{e-c}}),f^{p^{e-c}}\rangle}\right).
\]
Since for $a<b$ we have $\langle f^{ap^{e-c}} \rangle \supseteq \langle f^{bp^{e-c}} \rangle$ and hence
\[
\langle (I_{e}^{\Delta}:f^{ap^{e-c}}),f^{p^{e-c}}\rangle \subseteq \langle (I_{e}^{\Delta}:f^{bp^{e-c}}),f^{p^{e-c}}\rangle.
\]
It follows that
\[
D\left(\frac{a}{p^{c}},\frac{a+1}{p^{c}}\right) \leq D\left(\frac{a+1}{p^{c}},\frac{a+2}{p^{c}}\right)
\]
for all $a \geq 0$ as desired.
\end{proof}

From convexity, we easily obtain the following corollary.

\begin{corollary}
\label{cor.DerivativesAt0AreLipschitzConstants}
Suppose that $R$ is an $F$-finite normal local domain, $\Delta$ is an effective \mbox{$\bR$-divisor} on $\Spec(R)$,  and $f \in R$ is a non-zero element. Then the left $D_{-}s(R,\Delta, f^{t})$ and right derivatives $D_{+}s(R,\Delta,f^{t})$ exist on all of $(0,\infty)$ and are non-decreasing.  Furthermore, the right derivative $D_{+}s(R,\Delta,f^{0})$ exists, and $-D_{+}s(R,\Delta,f^{0})$ is the sharp (in other words, best) Lipschitz constant for $s(R,\Delta,f^{t})$.
\end{corollary}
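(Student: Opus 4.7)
The plan is to deduce both assertions from the convexity of $t \mapsto s(R,\Delta,f^{t})$ established in \autoref{thm.Convexity}, together with elementary facts from one-variable convex analysis. Note also that by the proof of \autoref{thm.Continuity} the signature function is non-increasing on $[0,\infty)$, and by the preceding corollary it is globally Lipschitz with constant $\length_{R}(R/\langle f,x_{2},\dots,x_{d}\rangle)$ for any completion of $f$ to a system of parameters.

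First I would invoke the classical fact that a convex function $g$ on an interval admits one-sided derivatives $D_{\pm}g(t)$ at every interior point, both $D_{-}g$ and $D_{+}g$ are non-decreasing, $D_{-}g \le D_{+}g$, and
\[
D_{+}g(t_{1}) \;\leq\; \frac{g(t_{2})-g(t_{1})}{t_{2}-t_{1}} \;\leq\; D_{-}g(t_{2})
\]
whenever $t_{1}<t_{2}$ in the interval. Specializing $g(t)=s(R,\Delta,f^{t})$ immediately yields part (a) on $(0,\infty)$. At the left endpoint, the difference quotients $\tfrac{s(R,\Delta,f^{h})-s(R,\Delta,f^{0})}{h}$ are monotone non-decreasing in $h>0$ by convexity, so the right derivative $D_{+}s(R,\Delta,f^{0})$ exists as a (possibly $-\infty$) monotone limit.

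For the Lipschitz-constant assertion, combine the three-term inequality above with the fact that $D_{+}s(R,\Delta,f^{t})$ is non-decreasing and non-positive: for $0 \leq t_{1} < t_{2}$,
\[
D_{+}s(R,\Delta,f^{0}) \;\leq\; D_{+}s(R,\Delta,f^{t_{1}}) \;\leq\; \frac{s(R,\Delta,f^{t_{2}})-s(R,\Delta,f^{t_{1}})}{t_{2}-t_{1}} \;\leq\; 0,
\]
so $-D_{+}s(R,\Delta,f^{0})$ is a Lipschitz constant. It is sharp because it arises as the limit of the difference quotients out of $0$: any strictly smaller constant would be violated by choosing $h$ sufficiently small in $\tfrac{s(R,\Delta,f^{0})-s(R,\Delta,f^{h})}{h}$.

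The only step that requires any care beyond routine convex analysis is ruling out $D_{+}s(R,\Delta,f^{0}) = -\infty$, and this is precisely where the preceding Lipschitz corollary enters: the uniform Lipschitz bound forces the difference quotients at $0$ to stay bounded in absolute value, so their monotone limit is finite. Everything else is bookkeeping.
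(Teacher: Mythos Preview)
Your proposal is correct and follows essentially the same approach as the paper: both arguments derive everything from convexity together with the prior Lipschitz bound, invoking the standard one-variable facts that a convex function has non-decreasing one-sided derivatives and that the supremum of the absolute difference quotients is realized at the left endpoint. The only cosmetic difference is that the paper extends $s(R,\Delta,f^{t})$ to a convex function on all of $\bR$ (using the Lipschitz bound) so that $t=0$ becomes an interior point, whereas you treat the endpoint directly by taking a monotone limit of difference quotients and invoking the Lipschitz bound to rule out $-\infty$; the paper also appeals to $s(R,\Delta,f^{t})=0$ for $t\geq 1$ rather than to monotonicity of $s$ to see that the derivatives are non-positive.
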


\begin{proof}
 Since $s(R,\Delta,f^{t})$ is Lipschitz and convex on $[0,\infty)$, it can be extended to a convex function on all of $\bR$.  This immediately implies the desired existence of the right and left derivatives at all points, including the existence of $D_{+}s(R,\Delta,f^{0})$.  The fact that they are non-decreasing is an immediate consequence of convexity.  Therefore, since $s(R,\Delta,f^{t}) = 0$ when $t \geq 1$, it follows that both left and right derivatives are non-positive at every point and
\[
\sup_{0 \leq t_{1} < t_{2}} \left| \frac{s(R,\Delta,f^{t_{2}}) - s(R,\Delta,f^{t_{1}})}{t_{2} - t_{1}}\right| = -D_{+}s(R,\Delta,f^{0})
\]
as claimed.
\end{proof}

\section{Relation with \texorpdfstring{$p$}{p}-fractals}
\label{sec.RelationWithPFractals}

In this section, we specialize to the case where $(R,\bm,k)$ is a \emph{regular} local $F$-finite ring of dimension $d$ and $\Delta=0$, where it is elementary to see $I^{\Delta = 0}_e=\bm^{[p^e]}$ (\cf~\cite[Theorem 4.13]{BlickleSchwedeTuckerFSigPairs1}).  Hence, by \autoref{thm.ConvenientDefFsigTrip}, the $F$-signature of $(R,\ba^{t})$ is given as the limit
\[
    s(R,\Delta,\ba^t) =\lim_{e \to \infty} \frac{1}{p^{ed}}\length_{R}(R/(\bm^{[p^e]}:\ba^{\lceil tp^e \rceil})).
\]
However, when considering a pair $(R, f^{t})$ where $\ba = \langle f \rangle$ is principal and $t = a / p^{c}$ is a rational number whose denominator is a power of $p$, we may simplify even further.
In this case, the above limit becomes
\[
\lim_{e \to \infty} {1 \over p^{ed}}\ell_R(R/(\bm^{[p^e]} : f^{ ap^{e-c}})).
\]
But since $R$ is regular, $(\bm^{[p^e]} : f^{ ap^{e-c}}) = (\bm^{[p^c]} : f^{a})^{[p^{e-c}]}$ and the limit is just a scalar multiple of the Hilbert-Kunz multiplicity $e_{\textnormal{HK}}((\bm^{[p^c]} : f^{a}), R)$ \cf \cite{MonskyHKFunction}.  Again by regularity of $R$ we then have $e_{\textnormal{HK}}((\bm^{[p^c]} : f^{a}), R) = \ell_R(R/(\bm^{[p^c]} : f^{a}))$. Summarizing, this yields:
\begin{proposition}
\label{prop.EasyComputationOfFSignatureForRegular}
Suppose that $(R, \bm, k)$ is an $F$-finite regular $d$-dimensional domain and $f \in R$ is a non-zero element.  Then for any integers $a, c > 0$
\[
s(R, f^{a/p^c}) = {1 \over p^{cd}}  \ell_R(R/(\bm^{[p^c]} : f^{a})).
\]
\end{proposition}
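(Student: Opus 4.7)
The plan is to start from the length formula recorded in \autoref{thm.ConvenientDefFsigTrip}, namely
\[
s(R,f^{a/p^{c}}) \;=\; \lim_{e\to\infty}\frac{1}{p^{ed}}\,\ell_{R}\!\bigl(R/(I_{e}^{\Delta=0}:f^{\lceil (a/p^{c})p^{e}\rceil})\bigr),
\]
and to show that for $e\ge c$ the sequence being averaged is in fact \emph{constant} and equal to the claimed value, so that the limit disappears.

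First I would observe that since $R$ is regular, $I_{e}^{\Delta=0}=\bm^{[p^{e}]}$ (this is the identification referenced at the start of Section~4, see \cite[Theorem~4.13]{BlickleSchwedeTuckerFSigPairs1}). Second, for any $e\ge c$ the ceiling is trivial: $\lceil (a/p^{c})p^{e}\rceil = ap^{e-c}$. These two remarks reduce the sequence to $p^{-ed}\,\ell_{R}(R/(\bm^{[p^{e}]}:f^{ap^{e-c}}))$.

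The main step is the colon identity
\[
\bigl(\bm^{[p^{e}]}:_{R}f^{ap^{e-c}}\bigr) \;=\; \bigl(\bm^{[p^{c}]}:_{R}f^{a}\bigr)^{[p^{e-c}]}
\]
for $e\ge c$. This is exactly where regularity is used: by Kunz's theorem the Frobenius $F\colon R\to R$ is flat, so the functor $J\mapsto J^{[p^{n}]}$ commutes with colon ideals by an element (equivalently, $F^{n}_{*}R$ is flat, so tensoring the exact sequence $0\to R/(J:f^{a})\xrightarrow{\cdot f^{a}} R/J$ with $F^{n}_{*}R$ preserves the kernel). Applying this with $J=\bm^{[p^{c}]}$ and $n=e-c$ gives the identity, since $(\bm^{[p^{c}]})^{[p^{e-c}]}=\bm^{[p^{e}]}$ and $(f^{a})^{p^{e-c}}=f^{ap^{e-c}}$.

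Finally, flatness of Frobenius also yields $\ell_{R}(R/K^{[p^{n}]}) = p^{nd}\,\ell_{R}(R/K)$ for any $\bm$-primary ideal $K\subseteq R$ (either directly, or as the trivial case of Kunz's formula for the Hilbert--Kunz multiplicity in a regular ring). Applying this with $K=(\bm^{[p^{c}]}:f^{a})$ and $n=e-c$ turns the above length into $p^{(e-c)d}\,\ell_{R}(R/(\bm^{[p^{c}]}:f^{a}))$. Dividing by $p^{ed}$ produces $p^{-cd}\,\ell_{R}(R/(\bm^{[p^{c}]}:f^{a}))$ independently of $e$, so the limit equals this value, proving the proposition. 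The only subtle point is the colon/Frobenius identity; the rest is bookkeeping with ceilings and bracket powers.
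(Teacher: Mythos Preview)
Your proof is correct and follows essentially the same route as the paper: reduce to the limit formula with $I_e^{\Delta=0}=\bm^{[p^e]}$, use regularity (flatness of Frobenius) to obtain the colon identity $(\bm^{[p^e]}:f^{ap^{e-c}})=(\bm^{[p^c]}:f^a)^{[p^{e-c}]}$, and then evaluate the resulting length. The only cosmetic difference is that the paper phrases the last step as recognizing the limit as a scalar multiple of $e_{\mathrm{HK}}((\bm^{[p^c]}:f^a),R)$ and then invoking $e_{\mathrm{HK}}=\ell$ for regular rings, whereas you compute $\ell_R(R/K^{[p^n]})=p^{nd}\ell_R(R/K)$ directly; these are the same observation.
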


The just described function appeared previously in the work of P.~Monsky and P.~Teixeira on $p$-fractals, \cite{MonskyTeixeiraPFractals1,MonskyTeixeiraPFractals2}. To show the relationship of $F$-signature with their work, we fix $k$ to be a finite (hence perfect) field and let $R = k \llbracket x_1, \dots, x_n \rrbracket$, $\bm = \langle x_1, \dots, x_n \rangle$, and $0 \neq f \in R$. Let us denote by
\[
\sI := \{ x \in \bQ\, |\, x \in [0,1], x = a/p^c, a \in \bN, c \in \bN \}
\]
the set of rational numbers in $[0,1]$ whose denominators are powers of $p$.  The authors of \cite{MonskyTeixeiraPFractals2} extensively study the function
\[
\phi_f \colon \sI \to \bQ
\]
defined by
\[
\phi_f(a/p^{c}) = p^{-cd} \cdot \dim_k \left(R/(\bm^{[p^c]} + \langle f^a \rangle)\right).
\]
In particular, the authors studied the implications of whether this function is a \mbox{$p$-fractal}, see \cite[Section 2]{MonskyTeixeiraPFractals1}.  By definition, a function $\varphi : \sI \to \bQ$ is a \emph{$p$-fractal} if the $\bQ$-vector-subspace of the set of functions $\sF = \{ \psi : \sI \to \bQ \}$ spanned by the functions
\[
    \{ \phi({\textstyle\frac{t+b}{p^e}}) | e \geq 0, 0 \leq b \leq p^e \}
\]
is finite dimensional as a $\bQ$-vector space.  Indeed, in \cite{TeixeiraThesis,MonskyTeixeiraPFractals2} it was shown that if $\phi_f$ is a $p$-fractal, then the Hilbert-Kunz series\footnote{The Hilbert-Kunz series of $f \in k\llbracket x_1, \dots, x_s \rrbracket =: R$ is the formal power series in $\bZ\llbracket z \rrbracket$:
\[
\sum_{n = 0}^{\infty} \dim_k \big(R/\langle x_1^{p^n}, \dots, x_s^{p^n}, f \rangle \big) z^n.
\]}
of $f \in R$ is a rational function.

Since we have the short exact sequence
\begin{equation}
\label{eq:standardtrick}
0 \to R/(\bm^{[p^e]} : f^a) \to[\cdot f^{a}] R/\bm^{[p^e]} \to R/(\bm^{[p^e]} + \langle f^a \rangle) \to 0
\end{equation}
we see immediately that
\begin{equation}
\label{eq:pfractalrelationship}
\varphi_f(a/p^e) := 1 - s(R, f^{a/p^e}),
\end{equation}
and in particular we have the following result.

\begin{theorem}
\cite[Theorem 1]{MonskyTeixeiraPFractals1}
  Consider $R = k \llbracket x,y \rrbracket$ where $k$ is a finite field of characteristic $p$ and suppose $f \in R$ is a non-zero element.  Then the function $\frac{a}{p^{c}} \mapsto s(R,f^{\frac{a}{p^{c}}})$ is a $p$-fractal.
\end{theorem}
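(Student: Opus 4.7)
The plan is to translate the statement through equation \autoref{eq:pfractalrelationship} and then invoke Monsky and Teixeira's Theorem~1 of \cite{MonskyTeixeiraPFractals1}. The key identity, already recorded above, is
\[
s(R, f^{a/p^c}) = 1 - \varphi_f(a/p^c)
\]
valid for every $a/p^c \in \sI$. This follows from \autoref{prop.EasyComputationOfFSignatureForRegular}, which expresses $s(R,f^{a/p^c})$ as $p^{-cd}\,\ell_R(R/(\bm^{[p^c]} : f^a))$, combined with the short exact sequence \autoref{eq:standardtrick} and the fact that $\ell_R(R/\bm^{[p^c]}) = p^{cd}$ since $R$ is regular of dimension $d = 2$.

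Having made this identification, the theorem reduces to the assertion that $\varphi_f$ is a $p$-fractal, which is precisely Monsky and Teixeira's theorem. What remains on our end is only the elementary check that the $p$-fractal property is preserved under the affine operation $\varphi \mapsto 1 - \varphi$: since $s(R, f^{(t+b)/p^e}) = 1 - \varphi_f((t+b)/p^e)$ for every $e \geq 0$ and $0 \leq b \leq p^e$, the $\bQ$-subspace of $\sF$ spanned by these restrictions lies inside $\bQ \cdot \mathbf{1} + \mathrm{span}\{\varphi_f((t+b)/p^e) : e \geq 0,\, 0 \leq b \leq p^e\}$, which is finite dimensional by hypothesis. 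In particular, a subspace of a finite-dimensional space is finite dimensional.

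The main obstacle in the proof lies entirely on Monsky and Teixeira's side: their delicate analysis of the Frobenius action on filtrations by powers of $f$ in a two-variable power series ring is where the restriction to two variables is genuinely needed (and, correspondingly, why we should not expect an analogous result in higher embedding dimensions without further hypotheses). Our contribution amounts to recognizing the essentially formal dictionary \autoref{eq:pfractalrelationship} between the $F$-signature function and the complementary quantity $\varphi_f$ considered in \cite{MonskyTeixeiraPFractals1,MonskyTeixeiraPFractals2}, which the machinery of \autoref{prop.EasyComputationOfFSignatureForRegular} makes immediate.
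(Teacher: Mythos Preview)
Your proposal is correct and matches the paper's approach exactly: the paper also derives the theorem as an immediate consequence of \eqref{eq:pfractalrelationship} together with the cited result of Monsky--Teixeira, writing simply ``and in particular we have the following result'' without further argument. Your added verification that the $p$-fractal property is preserved under $\varphi \mapsto 1-\varphi$ makes explicit a step the paper leaves implicit, but otherwise the two are identical.
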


\begin{remark}
In arbitrary dimension, the $F$-signature $t \mapsto s(R,f^{t})$ thus gives another interpretation via \eqref{eq:pfractalrelationship} of the functions $\varphi_f$ studied by Monsky and Teixeira.  This is particularly novel in that it is geometric in nature: setting $X = \Spec(R)$ and $D = \divisor_{X}(f)$, the function $\varphi_{f}$ is measuring the singularities of the pairs $(X, tD)$ given by scaling the divisor $D$.  Similar techniques are commonplace throughout (complex) birational algebraic geometry (see also Section~\ref{sec:minim-log-discr}).
\end{remark}

The primary motivation of Monsky and Teixeira for studying the functions $\varphi_{f}$ was in order to compute the Hilbert-Kunz multiplicity of the quotient $R / \langle f \rangle$.
The following result further clarifies the relationship of these functions to the Hilbert-Kunz multiplicity and $F$-signature of $R / \langle f \rangle$.  Geometrically, one should view this statement as a version of adjunction for the divisor $D = \divisor_{X}(f)$ on $X = \Spec(R)$.


\begin{theorem}
\label{thm.identifyderivatives}
  If $(R, \bm, k)$ is an $F$-finite regular $d$-dimensional local domain and $f \in R$ is a non-zero element, then
\[
D_{-}s(R,f^{1}) = -s(R/ \langle f \rangle) \quad \mbox{ and } \quad D_{+}s(R,f^{0}) = -e_{HK}(R/\langle f \rangle)  .
\]
In other words,  the left derivative of $s(R, f^{t})$ exists at $t = 1$ and equals the negative of the $F$-signature of $R/\langle f \rangle$.  Similarly, the right derivative of $s(R, f^{t})$ exists at $t = 0$ and equals the negative of the Hilbert-Kunz multiplicity of $R/\langle f \rangle$.
\end{theorem}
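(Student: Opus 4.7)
The approach is to identify both one-sided derivatives as subsequential limits of difference quotients along carefully chosen dyadic sequences $t = a/p^{e}$, where \autoref{prop.EasyComputationOfFSignatureForRegular} gives an explicit colength formula for $s(R, f^{t})$. By \autoref{thm.Convexity} the difference quotients are monotone in $t$, so any sequence approaching the endpoint computes the full one-sided derivative. I may assume $f \in \mathfrak{m}$, the unit case being vacuous.

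For the right derivative at $t = 0$, I take $t_{e} = 1/p^{e}$. Combining \autoref{prop.EasyComputationOfFSignatureForRegular} with the short exact sequence \eqref{eq:standardtrick} (at $a = 1$) and $\ell_{R}(R/\mathfrak{m}^{[p^{e}]}) = p^{ed}$ gives
\[
s(R, f^{1/p^{e}}) \;=\; 1 \,-\, p^{-ed}\, \ell_{R}\bigl(R/(\mathfrak{m}^{[p^{e}]} + \langle f \rangle)\bigr).
\]
Since $s(R, f^{0}) = s(R) = 1$, the difference quotient rearranges to
\[
\frac{s(R, f^{1/p^{e}}) - 1}{1/p^{e}} \;=\; -\,\frac{\ell_{R/\langle f \rangle}\!\bigl((R/\langle f \rangle)/\overline{\mathfrak{m}}^{[p^{e}]}\bigr)}{p^{e(d-1)}},
\]
where $\overline{\mathfrak{m}}$ is the maximal ideal of the $(d-1)$-dimensional local ring $R/\langle f \rangle$. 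The limit as $e \to \infty$ is $-e_{\textnormal{HK}}(R/\langle f \rangle)$ straight from the definition of the Hilbert-Kunz multiplicity.

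For the left derivative at $t = 1$, I take $t_{e} = (p^{e}-1)/p^{e}$ and observe $s(R, f^{1}) = 0$: writing $1 = p^{e}/p^{e}$, \autoref{prop.EasyComputationOfFSignatureForRegular} reduces this to $(\mathfrak{m}^{[p^{e}]} : f^{p^{e}}) = R$, which holds because $f \in \mathfrak{m}$ forces $f^{p^{e}} \in \mathfrak{m}^{[p^{e}]}$. The same proposition rewrites the difference quotient as
\[
\frac{s(R, f^{(p^{e}-1)/p^{e}})}{(p^{e}-1)/p^{e} - 1} \;=\; -\,\frac{\ell_{R}\bigl(R/(\mathfrak{m}^{[p^{e}]} : f^{p^{e}-1})\bigr)}{p^{e(d-1)}}.
\]
The remaining task is to recognize the rescaled limit as $s(R/\langle f \rangle)$. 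Setting $S = R/\langle f \rangle$, Fedder's criterion identifies $\Hom_{S}(F^{e}_{*}S, S)$ inside $\Hom_{R}(F^{e}_{*}R, R) \cong F^{e}_{*}R$ as the image of $F^{e}_{*}\langle f^{p^{e}-1}\rangle$; unwinding the associated defining ideal $I_{e}^{S}$ of $S$ (in the sense of \cite[Definition 3.3]{BlickleSchwedeTuckerFSigPairs1}) and using that $f \in (\mathfrak{m}^{[p^{e}]} : f^{p^{e}-1})$ shows that $I_{e}^{S}$ is the image of $(\mathfrak{m}^{[p^{e}]} : f^{p^{e}-1})$ in $S$, so $a_{e}(S) = \ell_{R}\bigl(R/(\mathfrak{m}^{[p^{e}]} : f^{p^{e}-1})\bigr)$ up to the standard $p^{e\alpha(R)}$ normalization. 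The rescaled limit therefore equals $s(S)$ by \cite{TuckerFSignatureExists}.

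The main obstacle is the Fedder-criterion bookkeeping in the left-derivative step: identifying $a_{e}(R/\langle f\rangle)$ with the displayed colength requires a careful unwinding of $\Hom_{R}(F^{e}_{*}R, R) \cong F^{e}_{*}R$ together with the characterization of $I_{e}^{S}$. The right-derivative computation, by contrast, is an essentially formal manipulation of \eqref{eq:standardtrick} and the definition of Hilbert-Kunz multiplicity.
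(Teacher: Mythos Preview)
Your proposal is correct and follows essentially the same route as the paper: both compute the one-sided derivatives along the sequences $t_e = 1/p^e$ and $t_e = (p^e-1)/p^e$, invoke convexity (via \autoref{cor.DerivativesAt0AreLipschitzConstants}) to guarantee that these subsequential limits give the full one-sided derivatives, use \autoref{prop.EasyComputationOfFSignatureForRegular} together with \eqref{eq:standardtrick} for the right derivative, and identify the left-derivative limit with $s(R/\langle f\rangle)$. The only cosmetic difference is that the paper cites \cite[Corollary 4.14]{BlickleSchwedeTuckerFSigPairs1} as a black box for the last identification, whereas you unwind the Fedder-criterion computation of $I_e^{R/\langle f\rangle}$ directly; your sketch of that step is correct (note $\alpha(R)=\alpha(R/\langle f\rangle)$ since the residue fields coincide).
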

\begin{proof}
The asserted existence follow immediately from \autoref{cor.DerivativesAt0AreLipschitzConstants}.  For the first statement, we observe by \autoref{prop.EasyComputationOfFSignatureForRegular} that
\[
\begin{array}{rcl}
D_{-}s(R,f^{1}) &=& \displaystyle\lim_{t \to 1^-} \left(\frac{s(R,f^{t})}{t - 1}\right)  = - \displaystyle\lim_{e \to \infty} \left(\frac{s(R,f^{1-1/p^e}) }{1/p^e}\right)\vspace{6pt}\\
 & = & - \displaystyle\lim_{e \to \infty} \left(\frac{{1 \over p^{ed}} \length_R\left(\frac{R}{(\bm^{[p^e]} : f^{p^{e}-1})}\right) }{1/p^e}\right)
    \vspace{6pt} \\ &=&  -\displaystyle\lim_{e \to \infty} {1 \over p^{e(d-1)}} \length_R\left(\frac{R}{\bm^{[p^e]} : (f^{p^{e}}:f)}\right)
\end{array}
\]
which, after switching the sign, equals the $F$-signature of $R/\langle f \rangle$ according to \cite[Corollary 4.14]{BlickleSchwedeTuckerFSigPairs1}.
Next,
we again use \autoref{prop.EasyComputationOfFSignatureForRegular} to observe that
\[
\begin{array}{rcl}
D_{+}s(R,f^{0}) &=& \displaystyle\lim_{t \to 0^+} \left(\frac{s(R,f^{t}) - 1}{t}\right)  = \displaystyle\lim_{e \to \infty} \left(\frac{s(R,f^{1/p^e}) - 1}{1/p^e}\right)\vspace{6pt}\\
 & = & \displaystyle\lim_{e \to \infty} \left(\frac{{1 \over p^{ed}} \length_R\left(R/(\bm^{[p^e]} : f)\right) - 1}{1/p^e}\right)\\
   & = & \displaystyle\lim_{e \to \infty} {1 \over p^{e(d-1)}}\left({\length_R\left(R/(\bm^{[p^e]} : f)\right) - p^{ed} }\right).
\end{array}
\]
However, using \eqref{eq:standardtrick}
and the fact that $R$ is regular, we see that $\length_R\left(R/(\bm^{[p^e]} : f)\right)$ is equal to $p^{ed} - \length_R\left(R/(\bm^{[p^e]} + \langle f \rangle) \right)$.  Therefore:
\[
D_{+}s(R,f^{0})  =  \displaystyle\lim_{e \to \infty}{-1 \over p^{e(d-1)}} \length_R\left(R/(\bm^{[p^e]} + \langle f \rangle) \right)
 =  - e_{HK}(R/\langle f \rangle)
\]
proving the second statement.
\end{proof}

\begin{corollary}
  Suppose that $(R, \bm, k)$ is an $F$-finite regular $d$-dimensional local domain and $f \in R$ is a non-zero element. Then $e_{HK}(R/ \langle f \rangle)$ is the sharp Lipschitz constant for the function $s(R, f^{t})$, and $R/ \langle f \rangle$ is strongly $F$-regular if and only if $s(R, f^{t})$ is \emph{not} differentiable at $t = 1$.
\end{corollary}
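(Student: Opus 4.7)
The overall plan is to derive both statements by combining \autoref{thm.identifyderivatives} with \autoref{cor.DerivativesAt0AreLipschitzConstants}, together with a brief auxiliary calculation using the explicit formula of \autoref{prop.EasyComputationOfFSignatureForRegular}. I will assume $f\in\bm$, since if $f$ is a unit then $\langle f\rangle = R$ and every assertion is vacuous.

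For the Lipschitz claim I would simply observe that \autoref{cor.DerivativesAt0AreLipschitzConstants} identifies the sharp Lipschitz constant of $t\mapsto s(R,f^t)$ on $[0,\infty)$ as $-D_{+}s(R,f^{0})$, while \autoref{thm.identifyderivatives} identifies this same quantity with $e_{HK}(R/\langle f\rangle)$. Stitching these together yields the first claim immediately.

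For the differentiability claim, the key additional ingredient is that $s(R,f^{t})=0$ for every $t\geq 1$, which in particular gives $D_{+}s(R,f^{1})=0$. I would establish this in two steps. First, by \autoref{thm.Continuity} it suffices to check vanishing on the dense set of rationals $t=a/p^{c}$ with $a\geq p^{c}$, and for such $t$ the formula \autoref{prop.EasyComputationOfFSignatureForRegular} reduces the problem to showing $(\bm^{[p^{c}]}:f^{a})=R$. Second, $f\in\bm$ immediately yields $f^{p^{c}}\in\bm^{[p^{c}]}$, so $f^{a}\in\bm^{[p^{c}]}$ for every $a\geq p^{c}$, and the colon ideal is indeed $R$.

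With $D_{+}s(R,f^{1})=0$ in hand, the rest is automatic. By \autoref{thm.identifyderivatives} we have $D_{-}s(R,f^{1})=-s(R/\langle f\rangle)\leq 0$, with equality precisely when $s(R/\langle f\rangle)=0$, i.e.\ precisely when $R/\langle f\rangle$ fails to be strongly $F$-regular. Since differentiability of $s(R,f^{t})$ at $t=1$ amounts to $D_{-}s(R,f^{1})=D_{+}s(R,f^{1})=0$, the obstruction to differentiability at $t=1$ is exactly the strong $F$-regularity of $R/\langle f\rangle$. The only step where anything beyond bookkeeping enters is the verification that $s(R,f^{t})\equiv 0$ on $[1,\infty)$, and this is the only place I anticipate any real (though minor) work.
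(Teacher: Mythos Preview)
Your proposal is correct and follows essentially the same route as the paper: the Lipschitz claim is obtained by combining \autoref{cor.DerivativesAt0AreLipschitzConstants} with \autoref{thm.identifyderivatives}, and the differentiability claim by noting $D_{+}s(R,f^{1})=0$ (since $s(R,f^{t})=0$ for $t\geq 1$) and comparing with $D_{-}s(R,f^{1})=-s(R/\langle f\rangle)$. The only difference is that the paper simply asserts $s(R,f^{t})=0$ for $t\geq 1$ as a known fact (already invoked in the proof of \autoref{cor.DerivativesAt0AreLipschitzConstants}), whereas you supply an explicit verification via \autoref{prop.EasyComputationOfFSignatureForRegular}; this is harmless extra detail rather than a genuinely different approach.
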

\begin{proof}
The first statement follows immediately from \autoref{thm.identifyderivatives} and \autoref{cor.DerivativesAt0AreLipschitzConstants}.  For the second statement, note that $D_{+}s(R,f^{1}) = 0$ as $s(R,f^{t}) = 0$ for all $t \geq 1$.  Thus, $s(R,f^{t})$ is not differentiable at $t = 1$ if and only if $D_{-}s(R,f^{1}) = -s(R/\langle f \rangle)$ is non-zero, which is equivalent to the strong $F$-regularity of $R/ \langle f \rangle$ by \cite{AberbachLeuschke}.
\end{proof}
\begin{figure}[t]
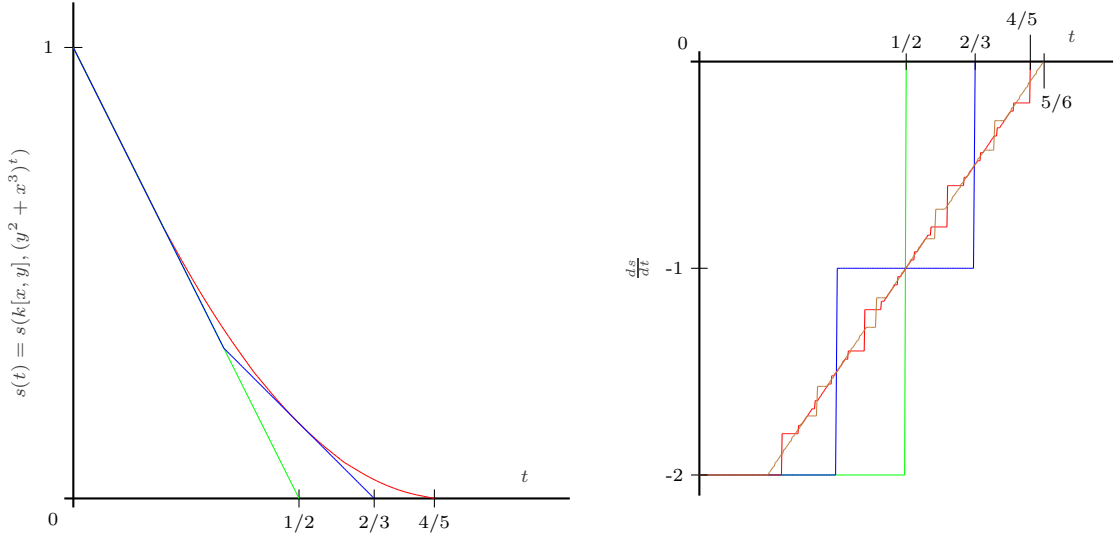


\begin{multicols}{2}


\end{multicols}
\caption{Graphs of the signature function of $s(k[x,y],(y^2+x^3)^t)$ (left) and an approximation of the derivative ${d s \over d t}$ (right) in characteristics {\color{green}2}, {\color{blue}3}, and {\color{red}5}. On the right we also included characteristic  {\color{brown}7} which is at this level of detail virtually indistinguishable from characteristic {\color{red}5} on the left, hence it is omitted there.}
\label{fig.SigCusp}
\end{figure}

We conclude this section with some numerical examples of the function $t \mapsto s(R, f^{t})$ generated in the computer algebra system Macaulay2.  For example, the code
\vskip 6pt
\noindent {\tt{R := ZZ/5[x,y,Degrees=>\{2,3\}]\\for i from 0 to 125 list \\
$\text{\;\;\;\;}$ toString(1-(1/5\textasciicircum 6)*degree(R\textasciicircum 1/(ideal(x\textasciicircum 125,y\textasciicircum 125,(x\textasciicircum 3+y\textasciicircum 2)\textasciicircum i))))}}
\vskip 6pt
\noindent will create the list of the $F$-signatures of $s(\bF_5[x,y], (x^3+y^2)^{i/125})$ for $i = 0, \dots, 125$.
In the graphs shown in \autoref{fig.SigCusp} we plot these values for various characteristics\footnote{See \cite[6.4 Example 4]{MonskyTeixeiraPFractals1} for an alternative analysis of the cusp; to our knowledge, this example is not well-established and understood -- particularly as the characteristic varies.}.  We also plot an approximation of the derivative ${ds \over d t}$  defined by:
\[
{d s \over d t}(i/p^e) \sim {s(R, f^{(i+1)/p^e}) - s(R, f^{i/p^e}) \over {1/p^e}}.
\]
with domain $\{ i / p^e \in \bQ \,|\, 0 \leq i/p^e < 1 \}$.
This approximation of the derivative of the signature function exhibits the same fractal-like behavior observed before in \cite{MonskyTeixeiraPFractals1}.


\begin{figure}[h]
\begin{center}
\begin{tikzpicture}[xscale=100,yscale=1000]
 \tikzstyle{every node}=[font=\tiny]
\begin{scope}[thick]
\draw(0.77,0.0066)--(0.77,-0.00015);
\draw(0.767,0)--(0.84,0);

\draw(4/5,-0.0003) node{4/5};
\draw(9/11,-0.0003) node{9/11};
\draw(5/6,-0.0003) node{5/6};
\draw(0.77,-0.0003) node{0.77};
\draw(0.763,0.006) node{$\sim$0.006};
\draw(0.765,0) node{0};


\begin{scope}[darkgray]
\draw(0.765, 0.0033) node[rotate=90]{$s(t)=s(k[x,y], (y^2 + x^3)^{t})$};

\draw(0.835, 0.0005) node{$t$};
\end{scope}
\end{scope}
\draw(4/5,-0.0001)--(4/5,0.0001);
\draw(9/11,0.0001)--(9/11,-0.0001);
\draw(5/6,0.0001)--(5/6,-0.0001);
\draw(0.769,0.006)--(0.771,0.006);

\begin{scope}[red]
\draw(.7696,.00608)--(.7712,.00576);
\draw(.7712,.00576)--(.7728,.00544);
\draw(.7728,.00544)--(.7744,.00512);
\draw(.7744,.00512)--(.776,.0048);
\draw(.776,.0048)--(.7776,.00448);
\draw(.7776,.00448)--(.7792,.00416);
\draw(.7792,.00416)--(.7808,.00384);
\draw(.7808,.00384)--(.7824,.00352);
\draw(.7824,.00352)--(.784,.0032);
\draw(.784,.0032)--(.7856,.00288);
\draw(.7856,.00288)--(.7872,.00256);
\draw(.7872,.00256)--(.7888,.00224);
\draw(.7888,.00224)--(.7904,.00192);
\draw(.7904,.00192)--(.792,.0016);
\draw(.792,.0016)--(.7936,.00128);
\draw(.7936,.00128)--(.7952,.00096);
\draw(.7952,.00096)--(.7968,.00064);
\draw(.7968,.00064)--(.7984,.00032);
\draw(.7984,.00032)--(.8,0);
\end{scope}

\begin{scope}[brown]
\draw(.769679,.0060774)--(.772595,.00553341);
\draw(.772595,.00553341)--(.77551,.00499792);
\draw(.77551,.00499792)--(.778426,.00452193);
\draw(.778426,.00452193)--(.781341,.00405443);
\draw(.781341,.00405443)--(.784257,.00361244);
\draw(.784257,.00361244)--(.787172,.00319595);
\draw(.787172,.00319595)--(.790087,.00280495);
\draw(.790087,.00280495)--(.793003,.00243946);
\draw(.793003,.00243946)--(.795918,.00208247);
\draw(.795918,.00208247)--(.798834,.00178497);
\draw(.798834,.00178497)--(.801749,.00149598);
\draw(.801749,.00149598)--(.804665,.00123248);
\draw(.804665,.00123248)--(.80758,.000994484);
\draw(.80758,.000994484)--(.810496,.000781987);
\draw(.810496,.000781987)--(.813411,.00059499);
\draw(.813411,.00059499)--(.816327,.000416493);
\draw(.816327,.000416493)--(.819242,.000297495);
\draw(.819242,.000297495)--(.822157,.000186997);
\draw(.822157,.000186997)--(.825073,.000101998);
\draw(.825073,.000101998)--(.827988,.0000424993);
\draw(.827988,.0000424993)--(.830904,.00000849986);
\draw(.830904,.00000849986)--(.8333333,0);
\end{scope}

\begin{scope}[RoyalPurple]
\draw(.769346,.00614091)--(.770098,.0059981);
\draw(.770098,.0059981)--(.770849,.00585642);
\draw(.770849,.00585642)--(.7716,.00571643);
\draw(.7716,.00571643)--(.772352,.00557813);
\draw(.772352,.00557813)--(.773103,.00544153);
\draw(.773103,.00544153)--(.773854,.00530662);
\draw(.773854,.00530662)--(.774606,.0051734);
\draw(.774606,.0051734)--(.775357,.00504188);
\draw(.775357,.00504188)--(.776108,.00491149);
\draw(.776108,.00491149)--(.77686,.00478109);
\draw(.77686,.00478109)--(.777611,.00465691);
\draw(.777611,.00465691)--(.778362,.00453273);
\draw(.778362,.00453273)--(.779113,.00440967);
\draw(.779113,.00440967)--(.779865,.00428831);
\draw(.779865,.00428831)--(.780616,.00416864);
\draw(.780616,.00416864)--(.781367,.00405066);
\draw(.781367,.00405066)--(.782119,.00393438);
\draw(.782119,.00393438)--(.78287,.0038198);
\draw(.78287,.0038198)--(.783621,.0037069);
\draw(.783621,.0037069)--(.784373,.00359513);
\draw(.784373,.00359513)--(.785124,.00348337);
\draw(.785124,.00348337)--(.785875,.00337781);
\draw(.785875,.00337781)--(.786627,.00327226);
\draw(.786627,.00327226)--(.787378,.00316783);
\draw(.787378,.00316783)--(.788129,.00306509);
\draw(.788129,.00306509)--(.788881,.00296405);
\draw(.788881,.00296405)--(.789632,.00286471);
\draw(.789632,.00286471)--(.790383,.00276705);
\draw(.790383,.00276705)--(.791134,.00267109);
\draw(.791134,.00267109)--(.791886,.00257682);
\draw(.791886,.00257682)--(.792637,.00248369);
\draw(.792637,.00248369)--(.793388,.00239055);
\draw(.793388,.00239055)--(.79414,.00230362);
\draw(.79414,.00230362)--(.794891,.00221669);
\draw(.794891,.00221669)--(.795642,.00213089);
\draw(.795642,.00213089)--(.796394,.00204678);
\draw(.796394,.00204678)--(.797145,.00196437);
\draw(.797145,.00196437)--(.797896,.00188365);
\draw(.797896,.00188365)--(.798648,.00180462);
\draw(.798648,.00180462)--(.799399,.00172729);
\draw(.799399,.00172729)--(.80015,.00165165);
\draw(.80015,.00165165)--(.800902,.00157714);
\draw(.800902,.00157714)--(.801653,.00150263);
\draw(.801653,.00150263)--(.802404,.00143433);
\draw(.802404,.00143433)--(.803156,.00136603);
\draw(.803156,.00136603)--(.803907,.00129773);
\draw(.803907,.00129773)--(.804658,.00122942);
\draw(.804658,.00122942)--(.805409,.00116112);
\draw(.805409,.00116112)--(.806161,.00109282);
\draw(.806161,.00109282)--(.806912,.00102452);
\draw(.806912,.00102452)--(.807663,.000956219);
\draw(.807663,.000956219)--(.808415,.000887917);
\draw(.808415,.000887917)--(.809166,.000819616);
\draw(.809166,.000819616)--(.809917,.000751315);
\draw(.809917,.000751315)--(.810669,.000683013);
\draw(.810669,.000683013)--(.81142,.000614712);
\draw(.81142,.000614712)--(.812171,.000546411);
\draw(.812171,.000546411)--(.812923,.000478109);
\draw(.812923,.000478109)--(.813674,.000409808);
\draw(.813674,.000409808)--(.814425,.000341507);
\draw(.814425,.000341507)--(.815177,.000273205);
\draw(.815177,.000273205)--(.815928,.000204904);
\draw(.815928,.000204904)--(.816679,.000136603);
\draw(.816679,.000136603)--(.817431,.0000683013);
\draw(.817431,.0000683013)--(.818182,0);
\end{scope}
\end{tikzpicture}
\end{center}
\caption{Graph of the $F$-signature function $s(k[x,y],(x^2+y^3)^t)$ near the $F$-pure threshold in characteristics {\color{red}5}, {\color{brown}7}, and {\color{RoyalPurple}11}.  Away from the $F$-pure threshold they are harder to distinguish at this level of detail.}
\end{figure}
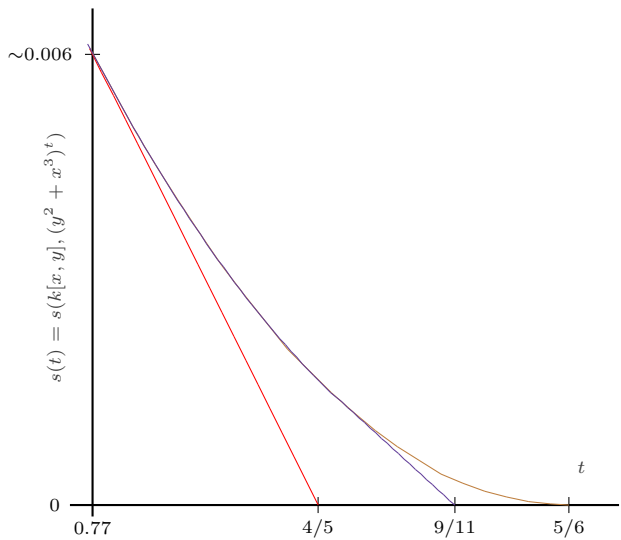

In these examples, the signature function is both continuous and convex as predicted by the results of \autoref{sec.Continuity}.  However, further inspection raises a number of natural questions.

\begin{question}
Set $R = \bF_p[x_1, \dots, x_n]_{\langle x_1, \dots, x_n \rangle}$ and fix $f \in R$.  Does the function $t \mapsto s(R, f^{t})$ have any sort of uniform behavior as $p$ goes to infinity?  If so, does the limit of these have a geometric interpretation in terms of the log resolution of $(R,f)$?
\end{question}

\begin{question}
Set $R = \bF_p[x_1, \dots, x_n]_{\langle x_1, \dots, x_n \rangle}$ and fix $f \in R$.  Does the function $t \mapsto s(R, f^{t})$ send rational numbers to rational numbers?
\end{question}

Based upon some further computer experimentation, we also ask.

\begin{question}
Set $R = \bF_p[x_1, \dots, x_n]_{\langle x_1, \dots, x_n \rangle}$ and fix $f \in R$.  When is the function $t \mapsto s(R, f^{t})$ piece-wise polynomial?  If so, what happens when $p$ goes to infinity for these pieces?
\end{question}

\begin{remark}
The work and computations of Monsky and Teixeira -- in light of \eqref{eq:pfractalrelationship} -- directly gives some substantial examples of the function $t \mapsto s(R, f^{t})$.  For example,  it follows from \cite[6.1 Example 1]{MonskyTeixeiraPFractals1} that for $t \in [0,1]$ we have
\[
s(\bF_{3}\llbracket x,y \rrbracket, (y^{3}-x^{4} + x^{2}y^{2})^{t}) = \frac{9 - 36t + 36t^{2}-\Delta^{2}(t)}{8}
\]
where $\Delta \: [0,1] \to \bR$ is the fractal-like function pictured in \cite[Figure 1]{MonskyTeixeiraPFractals1} determined by setting $\Delta(t) = 6t-3$ for $t \in [2/3, 1]$ together with the relations $\Delta(\frac{t}{3}) = \frac{1}{3}\Delta(1-t)$ and $\Delta(\frac{t+1}{3}) = \frac{1}{3}\Delta(t)$.
\end{remark}

\begin{remark}
As visible in the examples above, the functions $t \mapsto s(R,f^{t})$ exhibit rather complex and interesting behavior.  However, very few broad classes of examples have been computed: for example, little is known even in the case of a homogenous hypersurfaces with an isolated singularity.  In light of Theorem~\ref{thm.identifyderivatives}, computing these functions is a problem at least as difficult as explicitly computing Hilbert-Kunz multiplicities --  widely considered an extremely challenging problem.

Further evidence for the assertion that computing Hilbert-Kunz multiplicities is extremely difficult can be seen from \cite{MonskyRationalityofHKCounterexampleConjecture}.  Here, Monsky considers the homogenous cubic hypersurface defined by $x^{3}+y^{3}+xyz$ in $\bF_{2} \llbracket x,y,z \rrbracket$.  While the Hilbert-Kunz multiplicity of this hypersurface is known by \cite{BuchweitzChenHKfunctionsofCubics}, the corresponding function $t \mapsto s(\bF_{2}\llbracket x,y \rrbracket, (x^{3}+y^{3}+xyz)^{t})$ remains elusive.  In fact, again using \eqref{eq:pfractalrelationship}, \cite[Conjecture 1.5]{MonskyRationalityofHKCounterexampleConjecture} essentially conjectures a recursive relationship that completely determines this function (\textit{cf. loc. cit.} Theorem 1.9).  The importance of this conjecture stems from the fact that, if true, it implies that the weighted homogeneous hypersurface $uv + x^{3}+y^{3}+xyz$ in $\bF_{2}\llbracket x,y,z,u,v \rrbracket$ has irrational Hilbert-Kunz multiplicity and $F$-signature; see \cite[Corollary 2.7]{MonskyRationalityofHKCounterexampleConjecture} and \cite[Proposition 4.22]{TuckerFSignatureExists}.

\end{remark}

\section{Semi-continuity of the \texorpdfstring{$F$}{F}-signature of a pair}


Suppose that $R$ is a \emph{regular} $F$-finite domain and fix an element of $f \in R$ and positive integers $a, c$. We are interested in the signature $s(R_{\bq},f^{a/p^c})$ as a function in $\bq \in \Spec R$. Due to the special shape of the exponent $a/p^c$ we may use \autoref{prop.EasyComputationOfFSignatureForRegular} to see that this function
\[
    H \colonequals \Spec R \to \bR; \quad \bq \mapsto s(R_{\bq},f^{a/p^c}) = { 1 \over p^{c(\dim R_{\bq})}} \ell_{R_{\bq}}\big(R_{\bq}/ (\bq^{[p^c]} : f^{a})\big)
\]
is given by a single length. It is precisely this observation that enables us to show that $H$, and in consequence the $F$-signature for exponents of a special shape, is lower semi-continuous. Note that in general, where the $F$-signature is not known to be computed by a single length, it is much harder to obtain a semi-continuity result. The lower semi-continuity of the $F$-signature for arbitrary exponents will be derived afterwards with the help of our \emph{continuity in $t$}  result \autoref{thm.Continuity}.
\begin{proposition}
\label{prop.BaseSemiContinuityAsPointVaries}
Suppose that $R$ is a regular $F$-finite ring.  Suppose additionally that $f \in R$ and also $a, c \in \bN$.  Then the function
\[
H : \bq \in \Spec R \mapsto s(R_{\bq}, f^{a/p^c})
\]
is lower semi-continuous on $\Spec R$.
\end{proposition}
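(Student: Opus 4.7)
My plan is to rewrite $H(\bq)$ as one minus a ratio of fiber dimensions of two coherent sheaves on $\Spec R$, and then conclude via the standard upper semi-continuity theorem for fiber dimensions together with the local freeness of one of them.

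First, combining \autoref{prop.EasyComputationOfFSignatureForRegular} with the short exact sequence
\[
0 \to R_\bq/(\bq^{[p^c]}:f^a) \xrightarrow{\cdot f^a} R_\bq/\bq^{[p^c]} \to R_\bq/(\bq^{[p^c]}+(f^a)) \to 0
\]
and the regularity of $R_\bq$ (which yields $\ell_{R_\bq}(R_\bq/\bq^{[p^c]}) = p^{c\dim R_\bq}$), I obtain
\[
H(\bq) \;=\; 1 - \frac{\ell_{R_\bq}(R_\bq/(\bq^{[p^c]}+(f^a)))}{\ell_{R_\bq}(R_\bq/\bq^{[p^c]})}.
\]

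Next, I consider the coherent $R$-modules $\sF = F^c_*(R/f^a R)$ and $\sG = F^c_* R$. Localizing and tensoring with the residue field, one checks directly that $\sF_\bq \otimes_{R_\bq} \kappa(\bq) = F^c_*(R_\bq/(\bq^{[p^c]}+(f^a)))$ and $\sG_\bq \otimes_{R_\bq} \kappa(\bq) = F^c_*(R_\bq/\bq^{[p^c]})$. Using the general identity $\ell_{R_\bq}(F^c_* M) = p^{c\alpha(\bq)}\ell_{R_\bq}(M)$ for any finite length $R_\bq$-module $M$ (where $\alpha(\bq) = \log_p[\kappa(\bq):\kappa(\bq)^p]$, finite by $F$-finiteness), the common factor of $p^{c\alpha(\bq)}$ cancels in the displayed ratio, yielding
\[
H(\bq) \;=\; 1 - \frac{\dim_{\kappa(\bq)} \sF_\bq \otimes \kappa(\bq)}{\dim_{\kappa(\bq)} \sG_\bq \otimes \kappa(\bq)}.
\]

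To conclude, recall that since $R$ is regular and $F$-finite, Kunz's theorem implies $F^c$ is flat, so $\sG$ is a finitely generated flat, hence locally free, $R$-module. Therefore the denominator $\dim_{\kappa(\bq)} \sG_\bq \otimes \kappa(\bq)$ is a positive locally constant function on $\Spec R$, while the numerator is upper semi-continuous by the standard semi-continuity theorem for fibers of coherent sheaves. Hence the ratio is upper semi-continuous, and $H$ is lower semi-continuous as desired. The one delicate point in the argument is the cancellation of $p^{c\alpha(\bq)}$: while $\alpha(\bq)$ is not itself locally constant, it appears identically in numerator and denominator and so cancels cleanly (mirroring the fact that $\dim R_\bq + \alpha(\bq)$ is locally constant, reflecting the locally constant rank of $F^c_* R$).
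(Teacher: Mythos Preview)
Your proof is correct and follows essentially the same strategy as the paper: reduce via \autoref{prop.EasyComputationOfFSignatureForRegular} and the short exact sequence to upper semi-continuity of $\frac{1}{p^{c\dim R_{\bq}}}\ell_{R_{\bq}}(R_{\bq}/(\bq^{[p^c]},f^{a}))$, then reinterpret this (using Kunz) as a fiber dimension of the coherent sheaf $F^{c}_{*}(R/\langle f^{a}\rangle)$. The only difference is packaging: the paper passes through $\Tor_{1}$ and flat base change before citing EGA, whereas you go straight to $\Tor_{0}$ (the ordinary fiber $\sF\otimes\kappa(\bq)$) and invoke the elementary Nakayama-type upper semi-continuity, which is a mild but genuine streamlining.
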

\begin{proof}
As explained above, due to \autoref{prop.EasyComputationOfFSignatureForRegular}, we have to show that the expression ${1 \over p^{c \dim R_{\bq}}}\length_{R_{\bq}}(R_{\bq}/(\bq^{[p^c]} : f^a))$ is lower semi-continuous.  Using \eqref{eq:standardtrick}, we have
\[
{1 \over p^{c \dim R_{\bq}}}\length_{R_{\bq}}\left(R_{\bq}/(\bq^{[p^c]} : f^a)\right) = 1 - {1 \over p^{c \dim R_{\bq}}}\length_{R_{\bq}}\left(R_{\bq}/\langle \bq^{[p^c]} ,f^a \rangle \right)
\]
and so it is equivalent to show that the function
\[
\bq \in \Spec R \mapsto  {1 \over p^{c \dim R_{\bq}}}\length_{R_{\bq}}\left(R_{\bq}/\langle \bq^{[p^c]} ,f^a \rangle \right)
\]
is upper semi-continuous.  Since $R$ is excellent as it is $F$-finite \cite{KunzOnNoetherianRingsOfCharP} and $R/ \langle f^{a} \rangle$ is locally equidimensional (it is Cohen-Macaulay, and thus locally unmixed), this follows immediately from \cite{ShepherdBarronOnAProblemOfKunz}.  However, we include herein an alternative proof in our current setting.

Tensoring the short exact sequence
\[
    0 \to R_{\bq} \cramped{\to[\cdot f^a]} R_{\bq} \to R_{\bq}/\langle f^a \rangle \to 0
\]
 with $R_{\bq}/\bq^{[p^c]}$, we may extract from the resulting $\Tor$-sequence the equality
\begin{equation}
\label{eq:wasstar}
 {1 \over p^{c \dim R_{\bq}}}\length_{R_{\bq}}\left(\Tor_1^{R_{\bq}}(R_{\bq}/f^a, R_{\bq}/\bq^{[p^c]})\right) =  {1 \over p^{c \dim R_{\bq}}}\length_{R_{\bq}}\left( R_{\bq}/\langle \bq^{[p^c]} ,f^a \rangle  \right).
\end{equation}
Since for any finite length module $M$ one has $\length_{R_\bq}(F^c_*M)=p^{\alpha(R_\bq)} \length_{R_\bq}(M)$ we get that \eqref{eq:wasstar} is equal to
\[
     \frac{1}{p^{c(\alpha(R_\bq)+\dim R_{\bq})}} \length_{R_\bq}\left(F^c_*\Tor_1^{R_{\bq}}(R_{\bq}/\langle f^a \rangle, R_{\bq}/\bq^{[p^c]})\right)
\]
By \cite[Proposition 2.3]{KunzOnNoetherianRingsOfCharP} one knows that $\alpha(R_{\bq}) + \dim R_{\bq}$ is constant on $\Spec R$, hence so is the coefficient above. Using that $F^c_*(R_\bq/\bq^{[p^c]}) \cong F^c_*R_{\bq} \tensor k(\bq)$ we also have
\begin{align*}
F^c_* \Tor_1^{R_{\bq}}(R_{\bq}/\langle f^a \rangle , R_{\bq}/\bq^{[p^c]})
& \cong \Tor_1^{F^c_* R_{\bq}}(F^c_* (R_{\bq}/\langle f^a \rangle), (F^c_* R_{\bq}) \tensor_{R} k({\bq}) ) \\
& \cong \Tor_1^{R}(F^c_* (R/\langle f^a \rangle), k(\bq))
\end{align*}
where the second isomorphism is due to base-change for Tor, \cite[Proposition 3.2.9]{WeibelHomological}, which we can apply because $F^c_* R_{\bq}$ is a flat $R_{\bq}$-module (since $R$ is regular \cite{KunzCharacterizationsOfRegularLocalRings}). Now, the length of $\Tor_1^{R}(F^c_* (R/\langle f^a \rangle), k(\bq))$ (as an $R_{\bq}$-module) is upper semi-continuous in $\bq \in \Spec R$ by \cite[Theor\`eme 7.6.9]{EGAIII2}. This completes our proof.
\end{proof}
Combining the previous result with the continuity of $F$-signature of \autoref{thm.Continuity}, we obtain the following improvement.
\begin{corollary}\label{thm.SemiContinuityPair}
Suppose that $R$ is a regular $F$-finite ring.  Suppose additionally that $f \in R$ and $t > 0$ is a positive real number.  Then the function
\[
\bq \in \Spec R \mapsto s(R_{\bq}, f^t)
\]
is lower semi-continuous on $\Spec R$.
\end{corollary}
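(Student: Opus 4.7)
The plan is to deduce the general case $t \in \bR_{>0}$ from the $p$-adic rational case handled by \autoref{prop.BaseSemiContinuityAsPointVaries}, using \autoref{thm.Continuity} as a bridge. The key structural observation is that a pointwise supremum of lower semi-continuous functions on $\Spec R$ is itself lower semi-continuous, so it suffices to exhibit $\bq \mapsto s(R_\bq, f^t)$ as such a supremum of the semi-continuous building blocks provided by \autoref{prop.BaseSemiContinuityAsPointVaries}.

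First I would record the monotonicity of the signature in the exponent: for fixed $\bq \in \Spec R$, the function $t \mapsto s(R_\bq, f^t)$ is non-increasing. This follows at once from the formula in \autoref{thm.ConvenientDefFsigTrip}, since for $t \leq t'$ one has $\langle f^{\lceil tp^e\rceil}\rangle \supseteq \langle f^{\lceil t'p^e\rceil}\rangle$, which gives $(\bm^{[p^e]} : f^{\lceil tp^e\rceil}) \subseteq (\bm^{[p^e]} : f^{\lceil t'p^e\rceil})$ and hence a surjection of the corresponding quotients at every finite $e$ --- the desired inequality then survives the limit.

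Next I would choose a decreasing sequence of $p$-adic rationals $t_n = a_n/p^{c_n}$ with $t_n \searrow t$. By the monotonicity just established, $s(R_\bq, f^{t_n}) \leq s(R_\bq, f^t)$ for every $n$ and every $\bq$. Since each $R_\bq$ is again regular and $F$-finite, \autoref{thm.Continuity} applied locally at $\bq$ yields $\lim_{n \to \infty} s(R_\bq, f^{t_n}) = s(R_\bq, f^t)$, and combining these gives
\[
s(R_\bq, f^t) \;=\; \sup_{n \in \bN} s(R_\bq, f^{t_n})
\]
pointwise on $\Spec R$.

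Finally, since $t_n$ has $p$-power denominator, each function $\bq \mapsto s(R_\bq, f^{t_n})$ is lower semi-continuous by \autoref{prop.BaseSemiContinuityAsPointVaries}. The pointwise supremum of a family of lower semi-continuous functions is lower semi-continuous, so the same holds for $\bq \mapsto s(R_\bq, f^t)$, completing the argument. There is no real obstacle here: all of the genuine work has already been done in the preceding sections, and the only point requiring mild care is that the sequence $t_n$ must be chosen to decrease to $t$ (not increase) so that monotonicity gives the inequality pointing in the direction compatible with taking a supremum.
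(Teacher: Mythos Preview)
Your proof is correct and follows essentially the same approach as the paper: both arguments approximate $t$ from above by $p$-adic rationals, use monotonicity in the exponent to bound $s(R_\bq,f^t)$ below by $s(R_\bq,f^{a/p^c})$, invoke continuity in $t$ to make this approximation tight, and then apply \autoref{prop.BaseSemiContinuityAsPointVaries}. The only difference is cosmetic: you package the final step as ``a pointwise supremum of lower semi-continuous functions is lower semi-continuous,'' whereas the paper unwinds this into an explicit $\varepsilon/2$ argument.
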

\begin{proof}
Fix $\varepsilon > 0$ and choose $\bq_0 \in \Spec R$.  By the continuity of the function $\lambda \mapsto s(R_{\bq}, f^\lambda)$, we know that there exists a $\delta > 0$ such that $s(R_{\bq}, f^{\lambda}) \in \big( s(R_{\bq}, f^{\lambda}) - \varepsilon/2, s(R_{\bq}, f^{\lambda}) + \varepsilon/2\big)$ for all $\lambda \in (t - \delta, t + \delta)$.  Choose positive integers $a$ and $c$, such that $t + \delta > a/p^c \geq t$.  Thus
\[
s(R_{\bq}, f^t) \geq s(R_{\bq}, f^{a/p^c}) > s(R_{\bq}, f^t) - \varepsilon/2.
\]
Using \autoref{prop.BaseSemiContinuityAsPointVaries}, choose an open neighborhood $U$ of $\bq$ such that $s(R_{\bp}, f^{a/p^c}) > s(R_{\bq}, f^{a/p^c}) - \varepsilon/2$ for all $\bp \in U$.  But then
\[
s(R_{\bp}, f^t) \geq s(R_{\bp}, f^{a/p^c}) > s(R_{\bq}, f^{a/p^c}) - \varepsilon/2 > s(R_{\bq}, f^t) - \varepsilon
\]
for all $\bp \in U$, proving that $s(R_{\bp}, f^t)$ is lower semi-continuous as a function of $\bp \in \Spec R$ as desired.
\end{proof}

\section{Minimal log discrepancies}
\label{sec:minim-log-discr}

We now return to the setting of arbitrary triples $(R, \Delta, \ba^{t})$ (\textit{i.e.} where $R$ is not assumed regular and $\ba$ not assumed principal).  In general, while the test ideal $\tau(R, \Delta, \ba^t)$ measures the singularities of a given triple, it only provides information if the triple is \emph{not} strongly \mbox{$F$-regular}.  Complementarily, the $F$-signature $s(R, \Delta, \ba^{t})$ provides information on singularities which are strongly \mbox{$F$-regular}.  In characteristic zero, the analog of the test ideal is the multiplier ideal $\mJ(R, \Delta, \ba^t)$, which only provides information on non-Kawamata log terminal singularities; the most common tool for measuring singularities which are Kawamata log terminal is the \emph{minimal log discrepancy}.  This leads one to hope for a connection between the \mbox{$F$-signature} and the minimal log discrepancy.  In this section, we show that the $F$-signature of a triple is always a lower bound for the minimal log discrepancy.  We should point out that we expect that this relationship can be substantially improved.

Throughout this section we assume that $R$ is a normal and $F$-finite ring. We begin by recalling the definition of the minimal log discrepancy.

\begin{definition}[\protect{\cf~\cite[Section 2.3]{KollarMori}}]
Suppose that $(R, \Delta, \ba^t)$ is a triple with $X = \Spec R$ and $K_X + \Delta$ assumed to be $\bQ$-Cartier.  For any prime divisor $E$ (possibly non-exceptional) on a $\tld X$, a normal variety with a proper birational map $\pi : \tld X \to X = \Spec R$ such that $\ba \cdot \cO_{\tld X} = \cO_{X}(-G)$, we define the \emph{discrepancy of $(R, \Delta, \ba^t)$ along $E$}, denoted $b_E$ to be the coefficient of $E$ in $K_{\tld X} - \pi^*(K_X + \Delta) - t G$.  The \emph{log discrepancy of $(R, \Delta, \ba^t)$ along $E$} is simply the discrepancy plus one, $b_E + 1$.

Given a (possibly non-closed) point $x \in X = \Spec R$, the \emph{minimal log discrepancy of $(R, \Delta, \ba^t)$ at $x$} is simply
\begin{multline*}
\inf \{ b_E + 1 \, |\, \pi : \tld X \to X \text{ proper birational with $\tld X$ normal, $\ba \cdot \O_{\tld X}$ locally free,} \\
\text{and }  E \subseteq \tld X \text{ a prime divisor with }\pi(E) = x\}
\end{multline*}
possibly equalling negative infinity.  We denote this number by $\mld(x; X, \Delta, \ba^t)$.
\end{definition}

\begin{remark}
It is easy to see that $(R, \Delta, \ba^t)$ is Kawamata log terminal if and only if we have $\mld(x; X, \Delta, \ba^t) > 0$ for every point $x \in X = \Spec R$ (including codimension 1 points).
\end{remark}

We first prove a transformation rule for $F$-splitting numbers under birational maps, the main idea for the proof goes back to at least \cite{HaraWatanabeFRegFPure} and \cite{MehtaSrinivasFPureSurface}.

\begin{proposition}
\label{prop.TransformationOfSplittingNumbersUnderBirational}
Suppose that $(R, \Delta, \ba^t)$ is a triple where $X = \Spec R$ with $K_{X} + \Delta$ is $\Q$-Cartier, and $\pi : \tld X \to X = \Spec R$ is a proper birational map with $\tld X$ normal and $\ba \cdot \cO_X(-G)$.  Further suppose that $\eta \in \tld X$ is a possibly (non-closed) point mapping to a (possibly non-closed) point $x \in X$ and set $\tld \Delta = -K_{\tld X} + \pi^*(K_X + \Delta)$.  Then the \mbox{$F$-splitting} numbers $a_{e,x}^{\Delta, \ba^t}$ of the local ring $\mathcal{O}_{X,x}$ are less than or equal to the \mbox{$F$-splitting} numbers $a_{e,\eta}^{\tld \Delta + tG}$ of $\mathcal{O}_{\tld{X}, \eta}$, assuming $ \tld \Delta + tG$ is effective at $\eta$.
\end{proposition}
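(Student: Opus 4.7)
The plan is to show that each $(R,\Delta,\ba^t)$-projection $\varphi\: F^e_*R \to R$ defining a free summand at $x$ extends to an $\O_{\tld X}$-linear map $\tld\varphi\: F^e_*\O_{\tld X} \to \O_{\tld X}$ lying in the Cartier subalgebra for $(\tld X,\tld\Delta + tG)$ at $\eta$, and then to argue that this extension preserves the rank of the free summand. This follows the mechanism of Hara--Watanabe and Mehta--Srinivas already invoked in the statement.

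First, I would use the Grothendieck-duality identification realizing $\Hom_R(F^e_*R(\lceil(p^e-1)\Delta\rceil),R)$ as global sections of the reflexive sheaf $F^e_*\O_X(\lceil(1-p^e)(K_X+\Delta)\rceil)$, together with its $\tld X$-analog $F^e_*\O_{\tld X}(\lceil(1-p^e)(K_{\tld X}+\tld\Delta+tG)\rceil)$; note that the relation $K_{\tld X} + \tld\Delta = \pi^*(K_X + \Delta)$ matches the bilinear parts of the two divisors. Since both $X$ and $\tld X$ are normal and $\pi$ is birational, sections of such reflexive sheaves transfer from $X$ to $\tld X$ by restriction to a common big open subset (where $\pi$ is an isomorphism) and extension via the $S_2$ property. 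Combined with the identification $\ba \cdot \O_{\tld X} = \O_{\tld X}(-G)$, which converts the $\ba^t$-twist on $X$ into the $tG$-twist on $\tld X$, this produces the desired natural map sending $\varphi \in \sC^{\Delta,\ba^t}_{e,x}$ to an element $\tld\varphi \in \sC^{\tld\Delta + tG}_{e,\eta}$.

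Granting this extension, the proof concludes as follows. A free $(R,\Delta,\ba^t)$-summand of rank $a = a_{e,x}^{\Delta,\ba^t}$ is witnessed by projections $\varphi_1,\ldots,\varphi_a$ in $\sC^{\Delta,\ba^t}_{e,x}$ together with elements $u_1,\ldots,u_a \in R_x$ for which $[\varphi_i(F^e_* u_j)]_{i,j}$ is the identity matrix modulo $\bm_x$. Under the local ring homomorphism $R_x \to \O_{\tld X,\eta}$, the $u_j$ map to elements whose images under the extensions $\tld\varphi_i$ still give the identity modulo the maximal ideal of $\O_{\tld X,\eta}$ (since $\tld\varphi_i$ restricts to $\varphi_i$ on a dense open). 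This exhibits a free $(\tld X,\tld\Delta+tG)$-summand of $F^e_*\O_{\tld X,\eta}$ of rank at least $a$, giving the claimed inequality. The main technical obstacle will be the divisor bookkeeping across $\pi$: carefully managing the rounding from the various $\lceil(p^e-1)(\,\cdot\,)\rceil$ as one pulls back $\Delta$ and $\ba$, and confirming that the effectivity of $\tld\Delta + tG$ at $\eta$ is precisely what is needed for the extended maps to lie in a well-defined Cartier subalgebra at $\eta$.
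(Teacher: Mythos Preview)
Your proposal is correct and follows essentially the same route as the paper's proof: both arguments extend each projection $\varphi_i$ to the generic point of the fraction field and then check that the resulting map on $\O_{\tld X,\eta}$ lies in the Cartier subalgebra for $(\tld X,\tld\Delta+tG)$, after which surjectivity onto a free summand of the same rank follows because the elements witnessing the splitting on $X$ are already elements of $\O_{\tld X,\eta}$. The only stylistic difference is that the paper decomposes $\varphi_i=\sum_j a_{ij}\varphi_{ij}$ and tracks the associated divisors $\Delta_{ij}\geq\Delta$ individually (so that the $\ba$-twist becomes $tG$ via $a_{ij}\in\ba^{\lceil t(p^e-1)\rceil}\subseteq\O_{\tld X}(-\lceil t(p^e-1)\rceil G)$), whereas you package the same computation as a sheaf identification via Grothendieck duality; these are equivalent, and your acknowledged rounding bookkeeping is exactly where the two presentations meet.
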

\begin{proof}
Suppose that $(\oplus_i \varphi_i) : F^e_* \O_{X,x} \to \oplus_i \O_{X,x}$ is a finite direct sum of $(R, \Delta, \ba^t)$-summands and thus notice that $(\oplus_i \varphi_i)$ is surjective.  We claim that each $\varphi_i$ induces a map
\[
\tld{\varphi}_i : F^e_* \O_{\tld X, \eta} \subseteq F^e_* \O_{\tld X, \eta}(\lceil (p^e-1)(\tld \Delta + tG)\rceil) \to \O_{\tld X, \eta}
\]
which agrees with $\varphi_i$ on the field of fractions $K(X) = K(\tld X)$.  This is well known to experts, see \cite[Main Theorem]{HaraWatanabeFRegFPure} and \cf~\cite[ Theorem 6.7]{SchwedeCentersOfFPurity}, but we briefly sketch the idea.   Any $\varphi_i$ can be written as $\varphi_i = \sum_j a_{ij} \varphi_{ij}$ with $\varphi_{ij} \in \Hom_R(R(\lceil (p^e-1) \Delta \rceil), R)$ and $a_{ij} \in \ba^{\lceil t(p^e - 1) \rceil}$.  Each $\varphi_{ij}$ corresponds to a divisor $\Delta_{ij} \geq \Delta$ as in \cite[Section 4.4]{SchwedeTuckerTestIdealSurvey}.  By extending $\varphi_i$ and the $\varphi_{ij}$ to the fraction field $K(X)$ of $X$, we obtain $\varphi_i, \varphi_{ij} : F^e_* K(X) \to K(X)$.  We then restrict the domains of these maps to the local ring $F^e_* \O_{\tld X, \eta}$; we denote these restrictions by $\tld \varphi$.  The $\tld \varphi_{ij}$, at $\eta$, correspond to the divisors $-K_{\tld X} + \pi^*(K_X + \Delta_{ij}) \geq -K_{\tld X} + \pi^*(K_X + \Delta) = \tld \Delta$ (see the aforementioned references).  Then $\tld \varphi = \sum_j a_{ij} \tld \varphi_{ij}$ and it follows that the divisor corresponding to $\tld \varphi$, at least at $\eta$, is bigger than or equal to $\tld \Delta + tG$ as desired.

Regardless, taking the direct sum of the $\tld{\varphi}_i$, we obtain a map
\[
(\oplus_i \tld \varphi_i) : F^e_* \O_{\tld X, \eta} \subseteq F^e_* \O_{\tld X, \eta}(\lceil (p^e-1)(\tld \Delta + tG) \rceil) \to \oplus_i \O_{\tld X, \eta}.
\]
Each section of $F^e_* \O_{X}$ is also a section of $F^e_* \O_{\tld X, \eta} \subseteq F^e_* \O_{\tld X, \eta}(\lceil (p^e-1)\tld (\Delta + tG) \rceil)$.  Thus if $s \in F^e_* \O_{X}$ is sent to the $j$th basis element $e_j \in \oplus_i \O_{X,x}$ by $\oplus_i \varphi_i$, we also have that $s \in F^e_* \O_{\tld X, \eta} \subseteq F^e_* \O_{\tld X, \eta}(\lceil (p^e-1)(\tld \Delta + tG) \rceil)$ is sent to $e_j \in \oplus_i \O_{\tld X, \eta}$.  But this implies that $\oplus \tld\varphi_i$ is surjective as desired.
\end{proof}

\begin{corollary}\label{thm.F-sigLeqMLD}
Suppose that $(R, \Delta, \ba^t)$ is a strongly $F$-regular triple with $x \in X = \Spec R$.  Then $s(R_x, \Delta, \ba^t) \leq {\mld}(x; X, \Delta, \ba^t)$.
\end{corollary}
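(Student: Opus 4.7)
The plan is to combine the transformation rule for $F$-splitting numbers under proper birational maps (\autoref{prop.TransformationOfSplittingNumbersUnderBirational}) with the one-dimensional instance of the simple normal crossings formula \eqref{eq.SignatureOfSNC}, reducing the desired inequality to an explicit DVR computation. First, a completely formal bound gives $a_{e,x}^{\Delta,\ba^t} \leq p^{e(d_x + \alpha(R_x))}$ directly from the definition, and hence $s(R_x, \Delta, \ba^t) \leq 1$. This already suffices whenever $\mld(x; X, \Delta, \ba^t) \geq 1$.

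Now assume $\mld(x; X, \Delta, \ba^t) < 1$ and fix $\epsilon > 0$ with $\mld + \epsilon \leq 1$. By the definition of minimal log discrepancy, I can choose a proper birational $\pi\colon \tld X \to X$ with $\tld X$ normal and $\ba\cdot\O_{\tld X} = \O_{\tld X}(-G)$ invertible, together with a prime divisor $E$ on $\tld X$ satisfying $\pi(E) = x$ and $1 + b_E \leq \mld(x; X, \Delta, \ba^t) + \epsilon \leq 1$. Let $\eta$ denote the generic point of $E$, so that $\O_{\tld X, \eta}$ is a DVR. Setting $\tld\Delta := -K_{\tld X} + \pi^*(K_X + \Delta)$, the coefficient of $E$ in $\tld\Delta + tG$ equals $-b_E \in [0,1]$; in particular $\tld\Delta + tG$ is effective at $\eta$, so \autoref{prop.TransformationOfSplittingNumbersUnderBirational} applies and yields $a_{e,x}^{\Delta,\ba^t} \leq a_{e,\eta}^{\tld\Delta + tG}$ for every $e$.

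The two denominators appearing in the respective $F$-signature limits actually match: since $\alpha + \dim$ is a birational invariant for $F$-finite integral rings (as used in the proof of \autoref{prop.BaseSemiContinuityAsPointVaries} via \cite[Proposition 2.3]{KunzOnNoetherianRingsOfCharP}), we have $d_x + \alpha(R_x) = 1 + \alpha(\O_{\tld X, \eta})$. Dividing by $p^{e(d_x + \alpha(R_x))}$ and letting $e \to \infty$ then gives $s(R_x, \Delta, \ba^t) \leq s(\O_{\tld X, \eta}, \tld\Delta + tG)$. The right-hand side is the $F$-signature of a DVR pair whose divisor part is a single coordinate divisor with coefficient $-b_E \in [0,1]$, which by the one-dimensional case of \eqref{eq.SignatureOfSNC} evaluates to $1 - (-b_E) = 1 + b_E$. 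Combining,
\[
s(R_x, \Delta, \ba^t) \;\leq\; 1 + b_E \;\leq\; \mld(x; X, \Delta, \ba^t) + \epsilon,
\]
and sending $\epsilon \to 0$ completes the argument.

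The principal obstacle is the dimensional bookkeeping: naively comparing the splitting numbers at $x$ and $\eta$ would produce an inequality between the corresponding $F$-signatures off by a factor of $p^{e(d_x - 1)}$, and it is precisely the Kunz invariance of $\alpha + \dim$ across birational $F$-finite models that rescues the normalization. A secondary subtlety is that the effectiveness hypothesis of \autoref{prop.TransformationOfSplittingNumbersUnderBirational} forces $1 + b_E \leq 1$, which is why the regime $\mld \geq 1$ has to be disposed of separately by the universal bound $s \leq 1$.
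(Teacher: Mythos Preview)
Your argument is essentially the paper's own: the paper likewise combines \autoref{prop.TransformationOfSplittingNumbersUnderBirational}, Kunz's invariance of $\dim + \alpha$ under birational maps, and the one-variable instance of \eqref{eq.SignatureOfSNC} at the generic point of $E$, handling the case of positive discrepancy via the trivial bound $s \leq 1$. The only organizational difference is that the paper shows $s(R_x,\Delta,\ba^t) \leq 1 + b_E$ for \emph{every} divisor $E$ centered at $x$ and then passes to the infimum, while you choose one $E$ with $1+b_E$ within $\epsilon$ of $\mld$ and let $\epsilon \to 0$; these are equivalent.

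One small gap: when you assert $-b_E \in [0,1]$, the upper bound $-b_E \leq 1$ (equivalently $b_E > -1$) does not follow from your choice $1 + b_E \leq \mld + \epsilon \leq 1$; it requires knowing that strongly $F$-regular triples are Kawamata log terminal, which the paper invokes explicitly via \cite[Main Theorem]{HaraWatanabeFRegFPure}. Without this, \eqref{eq.SignatureOfSNC} would return $0$ rather than $1 + b_E$, and the chain $s(R_x,\Delta,\ba^t) \leq s(\O_{\tld X,\eta}, \tld\Delta + tG) = 1 + b_E$ would break at the last step.
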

\begin{proof}
Suppose $\pi : \tld X \to X$ is a proper birational map with $\tld X$ normal and consider a prime divisor $E \subseteq \tld X$ with generic point $\eta$ such that $\pi(\eta) = x \in X$.  Set $b$ to be the coefficient of $E$ in $-\tld \Delta - tG := K_{\tld X} - \pi^*(K_X + \Delta) -t G$.  If $b > 0$, then the log discrepancy of $(\tld X, \tld \Delta)$ at $\eta$ is positive and there is nothing to prove.  On the other hand, if $b < 0$, then we see $b > -1$ by the strongly $F$-regular assumption and \cite[Main Theorem]{HaraWatanabeFRegFPure}.  Now then, $s(\O_{\tld X, \eta}, \tld \Delta) = 1 + b$ by \autoref{eq.SignatureOfSNC} on page \pageref{eq.SignatureOfSNC}.  By \autoref{prop.TransformationOfSplittingNumbersUnderBirational}, and the fact that $d := \dim(R_x) + \alpha(R_x) = \dim(\O_{\tld X, \eta}) + \alpha(\O_{\tld X, \eta})$ by \cite[Proposition 2.3]{KunzOnNoetherianRingsOfCharP}, we see that
\[
\begin{array}{rl}
s(R, \Delta, \ba^t) = & \lim_{e \to \infty} {a_{e,x}^{\Delta,\ba^t} \over p^{ed}} \vspace{6pt} \\
\leq & \lim_{e \to \infty} {a_{e,\eta}^{\tld \Delta +tG} \over p^{ed}} \vspace{6pt} \\
= & s(\O_{\tld X, \eta}, \tld \Delta) = 1 + b.
\end{array}
\]
The result follows.
\end{proof}

\begin{remark}
The previous proof in fact shows that the log discrepancy of any divisor $E \subseteq \tld X$ such that $x \in \pi(E)$ is $\geq s(X, \Delta, \ba^t)$.
\end{remark}

We can obtain the following improvement as well.

\begin{corollary}
\label{cor.MldImprovement}
Suppose that $(R, \Delta, \ba^t)$ is a strongly $F$-regular pair with $x \in X = \Spec R$.  Further suppose that $\pi : \tld X \to X$ is a proper birational map and $\eta \in \tld X$ is such that $\pi(\eta) = x$.  Finally suppose that $\tld \Delta = -K_{\tld X} + \pi^*(K_X + \Delta)$ is an effective simple normal crossings divisor at $\eta$ with components $E_j$ passing through $\eta$ and with coefficients $b_j$.  Then $s(R, \Delta) \leq \prod (1 - b_j)$.
\end{corollary}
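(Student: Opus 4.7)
The plan is to reduce the bound to a direct application of \autoref{prop.TransformationOfSplittingNumbersUnderBirational} together with the explicit SNC formula \autoref{eq.SignatureOfSNC}. First, I would apply \autoref{prop.TransformationOfSplittingNumbersUnderBirational} to the map $\pi \colon \tld X \to X$ and the chosen point $\eta$ with image $x$. This yields $a_{e,x}^{\Delta} \leq a_{e,\eta}^{\tld \Delta}$ for every $e$. Dividing both sides by $p^{ed}$, where $d = \dim(R_x) + \alpha(R_x) = \dim(\O_{\tld X, \eta}) + \alpha(\O_{\tld X, \eta})$ (the latter equality is \cite[Proposition 2.3]{KunzOnNoetherianRingsOfCharP}, as was used in \autoref{thm.F-sigLeqMLD}), and taking $e \to \infty$, one obtains
\[
s(R_{x}, \Delta) \;\leq\; s(\O_{\tld X, \eta}, \tld \Delta).
\]

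Next, I would identify the right-hand side with $\prod (1 - b_j)$. Since $\tld \Delta$ is simple normal crossings at $\eta$ with components $E_j$ passing through $\eta$ of coefficients $b_j$, the regular local ring $\O_{\tld X, \eta}$ admits a system of parameters $y_1, \dots, y_m$ in which local equations for the $E_j$ appear among the $y_j$, so that $\tld \Delta$ is locally of the form $\sum b_j \Div(y_j)$. One may freely assume each $b_j \leq 1$, since otherwise the signature on the right is zero and the corollary holds trivially as $s(R_x,\Delta) \geq 0$. The SNC signature formula \autoref{eq.SignatureOfSNC} (derived in \cite[Example 4.19]{BlickleSchwedeTuckerFSigPairs1}) then computes
\[
s(\O_{\tld X, \eta}, \tld \Delta) \;=\; \prod_j (1 - b_j).
\]
Combining the two displays gives the claim.

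The main obstacle, and the only point that requires any care, is that \autoref{eq.SignatureOfSNC} was stated for the complete regular local ring $k \llbracket x_1, \dots, x_n \rrbracket$, whereas $\O_{\tld X, \eta}$ is only an $F$-finite regular local ring (not assumed complete). This is not a serious issue: the \mbox{$F$-signature} of a triple is invariant under completion — one checks directly from the length description in \autoref{thm.ConvenientDefFsigTrip} that $\length_{R}(R/(I_e^{\Delta} : \ba^{\lceil t p^e\rceil}))$ is unchanged upon passing from $R$ to its completion $\widehat{R}$, since the relevant modules have finite length, and the divisor $\tld \Delta$ pulls back faithfully. Thus the explicit SNC computation carries over, completing the proof.
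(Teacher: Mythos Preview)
Your proposal is correct and follows essentially the same approach as the paper: apply \autoref{prop.TransformationOfSplittingNumbersUnderBirational} as in the proof of \autoref{thm.F-sigLeqMLD} to obtain $s(R_x,\Delta) \leq s(\O_{\tld X,\eta},\tld\Delta)$, and then evaluate the right-hand side via the SNC formula \autoref{eq.SignatureOfSNC} (\cite[Example 4.19]{BlickleSchwedeTuckerFSigPairs1}). Your explicit treatment of the completion issue is a welcome addition that the paper leaves implicit.
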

\begin{proof}
This follows from the argument above again using Proposition \ref{prop.TransformationOfSplittingNumbersUnderBirational} and \cite[Example 4.19]{BlickleSchwedeTuckerFSigPairs1} (also see Section \ref{sec.Defn}).
\end{proof}

\begin{remark}
The results in this section are often far from optimal.  In particular, the statements of this section are trivial for any pair with canonical singularities.  As such, it is interesting to ask if any of the bounds in this section can be improved upon.  However, given the limited number of examples of $F$-signature that have been computed, it is unclear to us what form any sharpenings of this result may take on.
\end{remark}

\bibliographystyle{skalpha}
\bibliography{CommonBib}

\end{document}